\newtheorem{theorem}{Theorem}[section]
\newtheorem{lemma}[theorem]{Lemma}
\newtheorem{proposition}[theorem]{Proposition}
\theoremstyle{definition}
\newtheorem{definition}[theorem]{Definition}
\newtheorem{example}[theorem]{Example}
\theoremstyle{remark}
\newtheorem{remark}[theorem]{Remark}
\numberwithin{equation}{section}
\begin{document}

\title{Explicit results concerning quantum quasi-shuffle algebras and their applications}

\author{Run-Qiang Jian}
\address{School of Computer Science, Dongguan University
of Technology, 1, Daxue Road, Songshan Lake, 523808, Dongguan, P.
R. China}
\email{jian.math@gmail.com}
\thanks{}

\subjclass[2010]{Primary 16T25; Secondary 17B37}

\date{}

\dedicatory{}

\keywords{Quantum quasi-shuffle algebra, mixable shuffle,
Rota-Baxter algebra, tridendriform algebra}

\begin{abstract}
Using the concept of mixable shuffles, we formulate explicitly the
quantum quasi-shuffle product. We also provide a desirable
description of the subalgebra generated by the set of primitive
elements of the quantum quasi-shuffle bialgebra. A braided
coalgebra structure which is dual to the quantum quasi-shuffle in
some sense is constructed as well. We use quantum quasi-shuffle
algebras to provide examples of Rota-Baxter algebras and
tridendriform algebras.
\end{abstract}

\maketitle
\section{Introduction}
In \cite{Ree}, Ree introduced the shuffle algebra which has been
studied extensively during the last fifty years. The shuffle
product is carried out on the tensor space $T(V)$ of a vector
space $V$ by using the shuffle rule. Its natural generalization is
the quasi-shuffle product where $V$ is moreover an associative
algebra and the new product on $T(V)$ involves both of the shuffle
product and the multiplication of $V$. Quasi-shuffle algebras
first arose in the work of Newman and Radford \cite{NR} for the
study of cofree irreducible Hopf algebras built on associative
algebras. Later, they were discussed by many other mathematicians
for various motivations, such as multiple zeta values (\cite{Hof1}
and \cite{IKZ}), Rota-Baxter algebras (\cite{GK},\cite{G} and
\cite{EG}), and commutative tridendriform algebras (\cite{Lod}).

For both of physical and mathematical considerations, people wants
to deform or quantize some important algebra structures. The most
famous example is absolutely the quantum group introduced by
Drinfeld \cite{D} and Jimbo \cite{J}. To our surprise, there is an
implicit but significant connection between quantum groups and
shuffle algebras. Rosso \cite{Ro} constructed the quantization of
shuffle algebras. This is a new kind of quantized algebras and
leads to an intrinsic understanding of the quantum group. Since
shuffle algebras are special quasi-shuffle algebras, and the
importance of the later ones, people would expect to find out what
the quantization of quasi-shuffle algebras is and whether it can
bring us some useful information. Hoffman's q-deformation of
quasi-shuffles (\cite{Hof2}) is such an attempt. His idea is to
deform the multiplication formula according to a special case of
Rosso's quantum shuffles. After replacing the shuffle part by its
quantized version in the formula of quasi-shuffles, Hoffman tried
to deform the mixed term by multiplying a power of q, and found
that there is only one way making the new product to be
associative. This construction is more or less experiential as he
said. The general construction of quantized quasi-shuffles is due
to Rosso (\cite{Ro2}) in the spirit of his quantum shuffles. We
describe Rosso's idea as follows. Let $M$ be
 a Hopf bimodule over a Hopf algebra $H$. In addition, if $M$ is an algebra and the multiplication is
compatible with the braiding coming from the Hopf bimodule
structure in some sense, then one can construct a new algebra
structure on the cotensor coalgebra $T^c_H(M)$ by using its
universal property. In general, given a braided algebra
$(A,m,\sigma)$, one can construct an analogue of the quasi-shuffle
algebra in the braided category, where the action of the usual
flip is replaced by that of the braiding. The resulting algebra is
called a quantum quasi-shuffle algebra. In particular, Hoffman's
q-deformation of quasi-shuffle products is a special case of
Rosso's quantum quasi-shuffle. In \cite{JR}, the construction of
quantum quasi-shuffles appears, as a special braided cofree Hopf
algebra, in the framework of quantum multi-brace algebras.

 Some interesting properties of the quantum
quasi-shuffle algebras have been studied in \cite{JRZ}, including
the commutativity, universal property, and etc. This paper
continues the trip. We establish some explicit results concerning
this new subject. We start by reformulating the product.
Originally, the quantum quasi-shuffle algebra is constructed by
using the universal property of connected coalgebras (\cite{JR}).
Later, it is defined by an inductive formula (\cite{JRZ}). But
neither of these two constructions can provide an explicit
formula. To know more about this new subject, a more clear form of
the multiplication formula is definitely helpful. Here, we use the
notion of mixable shuffles introduced in \cite{GK} to establish a
complete description of the quantum quasi-shuffle product.
Contrast to the quantum symmetric algebra, we describe the
subalgebra of the quantum quasi-shuffle bialgebra generated by the
primitive elements. On the other hand, for the reason that the
universal property of connected coalgebras is not so familiar by
non-algebraists, we use the universal property of tensor algebras
to construct a braided coalgebra structure on $T(C)$ for a braided
coalgebra $C$, and show that its dual is the quantum quasi-shuffle
algebra. This enables one to study the quantum quasi-shuffle
algebra through its dual. Finally, as applications, we provide
examples of Rota-Baxter algebras and tridendriform algebras by
using quantum quasi-shuffles. Rota-Baxter algebras and
tridendriform algebras are important subjects in mathematics and
physics. So our constructions not only enlarge their families, but
also demonstrate the value of the quantum quasi-shuffles.

This paper is organized as follows. In Section 2, several concrete
examples of braided algebras are provided. In Section 3, we recall
the construction of quantum quasi-shuffle algebras and established
explicit formulas for the new product and the subalgebra generated
by the primitive elements. In Section 4, we construct the dual
coalgebra of the quantum quasi-shuffle algebra. Finally, in
Section 5, it contains some applications of the quantum
quasi-shuffle algebra involving examples of Rota-Baxter algebras
and tridendriform algebras.

\noindent\textbf{Notation.} In this paper, we denote by
$\mathbb{K}$ a ground field of characteristic 0. All the objects
we discuss are defined over $\mathbb{K}$. For a vector space $V$,
we denote by $\otimes$ the tensor product within $T(V)$, and by
$\underline{\otimes}$ the one between $T(V)$ and $T(V)$.

We denote by $\mathfrak{S}_{n}$ the symmetric group of the set
$\{1,2,\ldots,n\}$ and by $s_{i}$, $1\leq i\leq n-1$, the standard
generators of $\mathfrak{S}_{n}$ permuting $i$ and $i+1$. For
fixed $k,n\in \mathbb{N}$, we define the shift map
$\mathrm{shift}_k:\mathfrak{S}_{n}\rightarrow\mathfrak{S}_{n+k}$
by $\mathrm{shift}_k(s_i)=s_{i+k}$ for any $1\leq i\leq n-1$. For
the reason of intuition and the simplicity of notation, we denote
$1_{\mathfrak{S}_{k}}\times w=\mathrm{shift}_k(w)$ for any $w\in
\mathfrak{S}_{n}$. The notations $w\times 1_{\mathfrak{S}_{k}}$,
$1_{\mathfrak{S}_{k}}\times w\times 1_{\mathfrak{S}_{l}}$ and
others are understood similarly.

A braiding $\sigma$ on a vector space $V$ is an invertible linear
map in $\mathrm{End}(V\otimes V)$ satisfying the quantum
Yang-Baxter equation on $V^{\otimes 3}$: $$(\sigma\otimes
\mathrm{id}_{V})(\mathrm{id}_{V}\otimes \sigma)(\sigma\otimes
\mathrm{id}_{V})=(\mathrm{id}_{V}\otimes \sigma)(\sigma\otimes
\mathrm{id}_{V})(\mathrm{id}_{V}\otimes \sigma).$$ A braided
vector space $(V,\sigma)$ is a vector space $V$ equipped with a
braiding $\sigma$. For any $n\in \mathbb{N}$ and $1\leq i\leq
n-1$, we denote by $\sigma_i$ the operator $\mathrm{id}_V^{\otimes
i-1}\otimes \sigma\otimes \mathrm{id}_V^{\otimes n-i-1}\in
\mathrm{End}(V^{\otimes n})$. For any $w\in \mathfrak{S}_{n}$, we
denote by $T^\sigma_w$ the corresponding lift of $w$ in the braid
group $B_n$, defined as follows: if $w=s_{i_1}\cdots s_{i_l}$ is
any reduced expression of $w$, then $T^\sigma_w=\sigma_{i_1}\cdots
\sigma_{i_l}$. This definition is well-defined (see, e.g., Theorem
4.12 in \cite{KT}).

We define $\beta:T(V)\underline{\otimes} T(V)\rightarrow
T(V)\underline{\otimes} T(V)$ by requiring that the restriction of
$\beta$ on $V^{\otimes i}\underline{\otimes} V^{\otimes j}$,
denoted by $\beta_{ij}$, is $T^\sigma_{\chi_{ij}}$ , where
\[\chi_{ij}=\left(\begin{array}{cccccccc}
1&2&\cdots&i&i+1&i+2&\cdots & i+j\\
j+1&j+2&\cdots&j+i&1& 2 &\cdots & j
\end{array}\right)\in \mathfrak{S}_{i+j},\] for any $i,j\geq 1$. For convenience, we denote by
$\beta_{0i}$ and $\beta_{i0}$ the usual flip.

\section{Braided algebras}
We start by recalling the notion of braided algebras which is the
correspondent object of associative algebras in braided
categories. In the following, all algebras are assumed to be
associative but not necessarily unital.
\begin{definition}Let $A=(A,m)$ be an algebra with product $m$, and $\sigma$ be a braiding on $A$. We call the triple $(A,m,\sigma)$ a \emph{braided algebra} if it satisfies the following conditions:
\[\left\{
\begin{array}{lll}
(\mathrm{id}_A\otimes m)\sigma_1\sigma_2&=&\sigma( m\otimes
\mathrm{id}_A),\\[3pt] ( m\otimes
\mathrm{id}_A)\sigma_2\sigma_1&=&\sigma(\mathrm{id}_A\otimes m).
\end{array} \right.
\]
Moreover, if $A$ is unital and its unit $1_A$ satisfies that for
any $a\in A$,
\[\left\{
\begin{array}{lll}
\sigma(a\otimes 1_A)&=&1_A\otimes a,\\[3pt] \sigma(1_A\otimes a)&=&a\otimes
1_A,
\end{array} \right.
\]then $A$ is called a \emph{unital braided algebra}.
\end{definition}

\begin{remark}1. For any braided vector space $(V,\sigma)$, there is a trivial braided algebra structure on it whose multiplication is the trivial one $m=0$.

2. The braided algebra structure is very crucial. Given a braided
vector space $(V,\sigma)$, it is not reasonable that there should
be a non-trivial braided algebra structure on $V$. For instance,
let $V$ be a vector space with basis $\{e_1,e_2\}$. We define two
braidings $\sigma_1$ and $\sigma_2$ on $V$ respectively by

\[\left\{
\begin{array}{lll}
\sigma_1(e_{1}\otimes e_{1})&=&e_{1}\otimes e_{1}, \\
\sigma_1(e_{i}\otimes e_{2})&=&qe_{2}\otimes e_{1},\\
\sigma_1(e_{2}\otimes e_{1})&=&qe_{1}\otimes
e_{2}+(1-q^{2})e_{2}\otimes e_{1},\\
\sigma_1(e_{2}\otimes e_{2})&=&e_{2}\otimes e_{2},
\end{array} \right.
\]
and
$$\sigma_2(e_{i}\otimes e_{j})=qe_{j}\otimes e_{i},\ \ \ \forall i,j,$$
where $q\in\mathbb{K}$ is nonzero and not equal to $\pm 1$.

Then by an easy argument, one can show that the only product on
$V$ which is compatible with $\sigma_1$ is just the trivial one.
The case is the same for $\sigma_2$.

3. For any braided algebra $(A,m,\sigma)$, one can embed it into a
unital braided algebra
$(\widetilde{A},\widetilde{m},\widetilde{\sigma})$ in the
following way. First of all, we set
$\widetilde{A}=\mathbb{K}\oplus A$. Then we define the
multiplication $\widetilde{m}$ and the braiding
$\widetilde{\sigma}$ by: for any $\lambda,\mu\in \mathbb{K}$ and
$a,b\in A$
$$\widetilde{m}\big((\lambda+a)\otimes (\mu+b)\big)=\lambda\mu+\lambda\cdot b+\mu\cdot
a+m(a\otimes b), $$ and
$$\widetilde{\sigma}\big((\lambda+a)\otimes (\mu+b)\big)=\mu\otimes
\lambda+b\otimes \lambda+\mu\otimes a+\sigma(a\otimes b).
$$It is easy to verify that $(\widetilde{A},\widetilde{m},\widetilde{\sigma})$ is a braided algebra with unit $1\in \mathbb{K}$.
\end{remark}

Because all the constructions in this paper are based on braided
algebras, we provide several concrete examples which will either
be used in our later discussion or afford the reader some
illustrations. Some of them may be known, while some may be new.
For more examples, one can see \cite{B} and \cite{JR}.

\begin{example}Let $V$ be a vector space with basis $\{e_i\}$ which is at most countable. We provide a braided algebra structure on $V$. The braiding $\sigma$ on $V$ is given by $\sigma(e_i\otimes
e_j)=q_{ij}e_j\otimes e_i$, where $q_{ij}$'s are nonzero scalars
in $\mathbb{K}$ such that $q_{ij}q_{ik}=q_{i\ j+k}$ and
$q_{ik}q_{jk}=q_{i+j\ k}$ for any $i,j,k$. For instance, let $q$
be a nonzero scalar in $\mathbb{K}$ and $q_{ij}=q^{ij}$. The
multiplication $\cdot$ on $V$ which is compatible with the
braiding $\sigma$ is given as follows.

Case 1. If $V$ is a finite-dimensional vector space with basis
$\{e_1,e_2,\ldots,e_N\}$, then we define

\[e_i\cdot e_j= \left\{
\begin{array}{lll}
e_{i+j},&& \mathrm{if\ }i+j\leq N,\\[3pt]
0,&&\mathrm{otherwise}.
\end{array} \right.
\]

Case 2. If $V$ is a vector space with basis $\{e_i\}_{i\in
\mathbb{N}}$, then we define $e_i\cdot e_j=e_{i+j}$ for any
$i,j\in \mathbb{N}$.

It is evident that $\cdot$ is an associative algebra structure on
$V$ in both cases. Notice that
\begin{eqnarray*}(\mathrm{id}_V\otimes
\cdot)\sigma_1\sigma_2 (e_i\otimes e_j\otimes
e_k)&=&q_{jk}q_{ik}e_k\otimes e_{i+j}\\[3pt]
&=&q_{i+j\ k}e_k\otimes e_{i+j}\\[3pt]&=&\sigma(
\cdot\otimes\mathrm{id}_V)(e_i\otimes e_j\otimes
e_k),\end{eqnarray*}
 and similarly
$(
\cdot\otimes\mathrm{id}_V)\sigma_2\sigma_1=\sigma(\mathrm{id}_V\otimes
\cdot)$. Therefore $(V,\cdot,\sigma)$ is a braided algebra.
\end{example}

\begin{example}All notions of this example can be found in \cite{Ka}. Let $q\neq 1$ be a invertible scalar in $\mathbb{K}$, and $x,y$ be two indeterminates. Denote by $\mathbb{K}_q[x,y]$ the quantum plane, i.e., the algebra generated by $x,y$ with the relation $yx=qxy$. It has a linear basis $\{x^iy^j\}_{i,j\geq 0}$. Define two algebra automorphisms $\omega_x$ and $\omega_y$ of $\mathbb{K}_q[x,y]$ by requiring that
$$\omega_x(x)=qx,\omega_x(y),\omega_y(x)=x,\omega_y(y)=qy,$$ and define two endomorphisms $\partial_q/\partial x$ and $\partial_q/\partial y$ by requiring that $$\frac{\partial_q(x^my^n)}{\partial x}=[m]x^{m-1}y^n,\frac{\partial_q(x^my^n)}{\partial y}=x^m[n]y^{n-1},$$ where $[k]=\frac{q^k-q^{-k}}{q-q^{-1}}$ for any $k\in \mathbb{N}$.

Let $U_q \mathfrak{sl}_2$ be the quantized algebra associated to
$\mathfrak{sl}_2$, i.e., the algebra generated by $E,F,K,K^{-1}$
with the relations $$KK^{-1}=K^{-1}K=1,$$
$$KE=q^2EK, \ KF=q^{-2}FK,$$  $$EF-FE=\frac{K-K^{-1}}{q-q^{-1}}.$$ It is well-known that $U_q \mathfrak{sl}_2$ is a quasi-triangular Hopf algebra.

By Theorem VII 3.3 in \cite{Ka}, $\mathbb{K}_q[x,y]$ is a $U_q
\mathfrak{sl}_2$-module-algebra with the following module
structure: for any $P\in \mathbb{K}_q[x,y] $,
$$EP=x\frac{\partial_q(P)}{\partial y}, FP=\frac{\partial_q(P)}{\partial x}y, $$ $$KP=(\omega_x\omega_y^{-1})(P), K^{-1}P=(\omega_y\omega_x^{-1})(P).$$
Set $V=\mathrm{Span}_\mathbb{K}\{x,y\}$. It is not hard to see
that the above action restricting on $V$ is the standard
2-dimensional simple $U_q \mathfrak{sl}_2$-module structure. We
know that (Theorem 2.7 in \cite{JR}) every module-algebra over a
quasi-triangular Hopf algebra has a braided algebra structure. So
$\mathbb{K}_q[x,y]$ is a braided algebra.\end{example}

\begin{example}Let $(V,\sigma)$ be a braided vector space.  It
is know that $(T(V),m,\beta)$ is a braided algebra, where $m$ is
the concatenation product. Let $M_n: V^{\otimes n}\rightarrow
T(V)$ be a linear map such that $\beta(M_n\otimes
\mathrm{id}_V)=(\mathrm{id}_V\otimes M_n)\beta_{n1}$. If we denote
by $\mathcal{I}$ the ideal of $T(V)$ generated by
$\mathrm{Im}M_n$, the image of $M_n$, then
$\beta\big(T(V)\underline{\otimes}\mathcal{I}+\mathcal{I}\underline{\otimes}T(V)\big)\subset
T(V)\underline{\otimes}I+I\underline{\otimes}T(V)$. So the
quotient algebra $T(V)/\mathcal{I}$ is also a braided algebra. For
instance, if $M_2=\mathrm{id}_V^{\otimes 2}-\sigma$, then the
quotient algebra is the $r$-symmetric algebra defined in \cite{B}.
\end{example}

\begin{example}Let $H$ be a finite dimensional quasi-triangular Hopf algebra. By a result of Majid (Theorem 3.3 in \cite{M}), the quantum double $\mathcal{D}(H)$ of $H$ is a braided algebra (according to a discussion in \cite{JR} for Radford's work \cite{Ra}).\end{example}

\section{Quantum quasi-shuffle algebras}
For any algebra $A$, it is a braided algebra with respect to the
flip map switching the two factors of $A\otimes A$. One can
construct an algebra structure on $T(A)$ which combines the the
multiplication of $A$ and the shuffle product of $T(A)$ (see
\cite{NR}). This structure is the the so-called quasi-shuffle
algebra. In fact, if the flip map is replaced by a braiding, one
can construct a quantized quasi-shuffle product by assuming some
compatibilities between the multiplication of $A$ and the braiding
(for more details, one can see \cite{JR} and \cite{JRZ}). Given a
braided algebra $(A, m,\sigma)$, the \emph{quantum quasi-shuffle
product} $\Join_\sigma$ on $T(A)$ is given by the following
inductive formula: for $i,j>1$ and any $a_1,\ldots,
a_i,b_1,\ldots, b_j\in A$,
\begin{eqnarray*}
\lefteqn{(a_1\otimes\cdots\otimes a_i)\Join_\sigma (b_1\otimes\cdots\otimes b_j)}\\
&=&a_1\otimes \big((a_2\otimes\cdots\otimes a_i)\Join_\sigma (b_1\otimes\cdots\otimes b_{j})\big)\\
&&+(\mathrm{id}_A\otimes \Join_{\sigma (i,j-1)})(\beta_{i,1}\otimes \mathrm{id}_A^{\otimes j-1})(a_1\otimes\cdots\otimes a_i\otimes b_1\otimes\cdots\otimes b_j)\\
&&+(m\otimes\Join_{\sigma (i-1,j-1)} )(\mathrm{id}_A\otimes
\beta_{i-1,1}\otimes \mathrm{id}_A^{\otimes
j-1})(a_1\otimes\cdots\otimes a_i\otimes b_1\otimes\cdots\otimes
b_j),
\end{eqnarray*}
where  $\Join_{\sigma (i,j)}$ denotes the restriction of
$\Join_\sigma$ on $V^{\otimes i}\underline{\otimes}V^{\otimes j}$.

\begin{remark}1. Given a braided algebra $(A, m,\sigma)$, $T_{\sigma}^{qsh}(A)=(T(A),\Join_\sigma)$ is a an associative algebra with unit $1\in \mathbb{K}$, and called the \emph{quantum quasi-shuffle algebra} built on $(A,
m,\sigma)$. Furthermore, the algebra $T_{\sigma}^{qsh}(A)$,
together with the braiding $\beta$ and the deconcatenation
coproduct $\delta$, forms a braided bialgebra in the sense of
\cite{Ta} (see \cite{JR}). The set of primitive elements is
exactly $A$.

2. By using Example 2.3, Proposition 17 in \cite{JRZ} can be
applied to any vector space whose basis is at most countable. In
other words, for any vector space $V$ with at most countable
basis, one can provide a linear basis of $T(V)$ by combining the
quantum quasi-shuffle product with Lyndon words.\end{remark}

\begin{example}[Hoffman's q-deformation]In \cite{Hof2}, Hoffman defined his q-deformation of
quasi-shuffles. It is an attempt to deform the quasi-shuffle
product according to the quantum shuffle product. Now we give an
explanation of the q-deformation from a point of view of the
quantum quasi-shuffles. Let $X$ be a locally finite set, i.e., $X$
is a disjoint union of finite set $X_n$, whose elements are called
letters of degree n, for $n\geq 1$. We denote by $\mathfrak{X}$
the vector space spanned by $X$. The elements in
$T(\mathfrak{X})$, which are of the form $a_1\otimes a_2\otimes
\cdots \otimes a_m$ with $a_i\in X$, are called words. Let $[ ,]$
be a graded associative product on $\mathfrak{X}$. Hoffman defined
an associative product $\ast_q$ on $T(\mathfrak{X})$: for any
words $w_1,w_2\in T(\mathfrak{X})$ and letters $a,b\in X $,
\begin{eqnarray*}\lefteqn{(a\otimes
w_1)\ast_q (b\otimes w_2)}\\
&=&a\otimes \big(w_1\ast_q(b\otimes w_2)\big)+q^{|a\otimes
w_1||b|}b\otimes \big((a\otimes w_1)\ast_q w_2
\big)+q^{|w_1||b|}[a,b](w_1\ast_q w_2),\end{eqnarray*}where $|w|$
is the degree of a word, i.e., the sum of degrees of its factors.

We define a braiding $\sigma$ on $\mathfrak{X}$ as follows: for
$x\in X_i$ and $y\in X_j$, $\sigma(x\otimes y)=q^{ij}y\otimes x$,
where $q\in K$ is a nonzero scalar. Since the product $[ ,]$
preserves the grading, it is an easy exercise to verify that
$(\mathfrak{X},[,],\sigma)$ is a braided algebra. By comparing
their reductive formulas, one can see that the quantum
quasi-shuffle algebra built on $(\mathfrak{X},[,],\sigma)$ is just
Hoffman's q-deformation.\end{example}

In order to give a more explicit description of the quantum
quasi-shuffle product, we need to recall some terminologies
introduced in \cite{GK}. An $(i,j)$-\emph{shuffle} is an element
$w\in \mathfrak{S}_{i+j}$ such that $w (1) < \cdots <w (i)$ and $w
(i+1) < \cdots <w (i+j)$. We denote by $\mathfrak{S}_{i,j}$ the
set of all $(i,j)$-shuffles. Given an $(i,j)$-shuffle $w$, a pair
$(k,k+1)$, where $1\leq k < i+j$, is called an \emph{admissible
pair} for $w$ if $w^{-1}(k)\leq i<w^{-1}(k+1)$. We denote by
$\mathcal{T}^w$ the set of all admissible pairs for $w$. For any
subset $S$ of $\mathcal{T}^w$, the pair $(w,S)$ is called a
\emph{mixable $(i,j)$-shuffle}. We denote by
$\overline{\mathfrak{S}}_{i,j}$ the set of all mixable
$(i,j)$-shuffles, i.e.,
$$\overline{\mathfrak{S}}_{i,j}=\{(w,S)|w\in \mathfrak{S}_{i,j}, S\subset \mathcal{T}^w\}.$$

Let $(A, m,\sigma)$ be a braided algebra. Define $m^k:A^{\otimes
k+1}\rightarrow A$ recursively by $m^0=\mathrm{id}_A$, $m^1=m$ and
$m^k=m(\mathrm{id}_A\otimes m^{k-1})$ for $k\geq 2$. Given
$n\in\mathbb{N}$, we denote
$$\mathcal{C}(n)=\{I=(i_1,\ldots,i_k)\in
\mathbb{N}^k|i_1+\cdots+i_k=n\}.$$
 The elements in $\mathcal{C}(n)$ are called \emph{compositions} of
 $n$. For any $I=(i_1,\ldots,i_k)\in \mathcal{C}(n)$, we define $m_I=m^{i_1-1}\otimes\cdots\otimes
 m^{i_k-1}$. For any $(w,S)\in\overline{\mathfrak{S}}_{i,j}$, we
 associate to $S$ a composition $\textrm{cp}(S)$ of $i+j$ as follows: if
$S=\{(k_1,k_1+1),\ldots, (k_s,k_s+1)\}$ with $k_1<\cdots <k_s$,
set  $$\textrm{cp}(S)=(\underbrace{1,\ldots\ldots,1}_{k_1-1\
\textrm{copies}},
2,\underbrace{1,\ldots\ldots\ldots\ldots,1}_{k_2-k_1-2\
\textrm{copies}},2,\ldots,2,\underbrace{1,\ldots\ldots\ldots,1}_{i+j-k_s-1\
\textrm{copies}}).$$  By convention, we set
$\textrm{cp}(\emptyset)=(1,1,\ldots,1)$. Denote
$T_{(w,S)}^\sigma=m_{\textrm{cp}(S)}\circ T_w^\sigma$.

\begin{theorem}Let $(A, m,\sigma)$ be a braided algebra. Then for any $a_1,\ldots,a_{i+j}\in A$,  $$(a_1\otimes\cdots\otimes a_i)\Join_\sigma (a_{i+1}\otimes\cdots\otimes a_{i+j})=\sum_{(w,S)\in\overline{\mathfrak{S}}_{i,j}}T_{(w,S)}^\sigma(a_1\otimes \cdots\otimes a_{i+j}).$$\end{theorem}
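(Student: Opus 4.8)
The plan is to prove this by induction on the total degree $i+j$, matching the given inductive definition of $\Join_\sigma$ term by term against the three groups of summands on the right-hand side. The base cases ($i=0$ or $j=0$) are immediate since $\overline{\mathfrak{S}}_{0,j}$ and $\overline{\mathfrak{S}}_{i,0}$ each consist of a single element $(w,\emptyset)$ with $w=\mathrm{id}$, so $T^\sigma_{(w,\emptyset)}=\mathrm{id}$, matching the fact that the unit acts trivially. For the inductive step with $i,j\geq 1$, I would partition the index set $\overline{\mathfrak{S}}_{i,j}$ according to the behaviour of $w$ and $S$ on the first letter $a_1$, i.e., on the positions $w^{-1}(1)$ and the possible membership of $(1,2)$-type data; concretely, split into (a) shuffles with $w^{-1}(1)=1$ (so $a_1$ stays leftmost and is not merged), (b) shuffles with $w(i+1)=1$ and $(1,2)\notin S$ (so $b_1$ comes first, no merge at the front), and (c) shuffles with $a_1$ and $b_1$ adjacent at the front and merged, i.e., $(1,2)\in S$ in an appropriate sense. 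The claim is that these three classes biject, respectively, with $\overline{\mathfrak{S}}_{i-1,j}$ (via deleting the first slot), with $\overline{\mathfrak{S}}_{i,j-1}$, and with $\overline{\mathfrak{S}}_{i-1,j-1}$, and that under these bijections the operators $T^\sigma_{(w,S)}$ restrict/factor exactly into the three operators appearing in the inductive formula.

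The key computational steps are the three factorization identities for the braid lifts. First, for class (a): if $w\in\mathfrak{S}_{i,j}$ fixes $1$, then $w=1_{\mathfrak{S}_1}\times w'$ for $w'\in\mathfrak{S}_{i-1,j}$, hence $T^\sigma_w=\mathrm{id}_A\otimes T^\sigma_{w'}$ by compatibility of the braid lift with the shift map, and the merges encoded by $S$ all lie to the right of the first tensor slot, giving $T^\sigma_{(w,S)}=\mathrm{id}_A\otimes T^\sigma_{(w',S')}$; summing over class (a) reproduces $a_1\otimes\big((a_2\otimes\cdots)\Join_\sigma(b_1\otimes\cdots)\big)$ by the induction hypothesis. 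Second, for class (b): the shuffle condition forces $w$ to move $b_1$ past all of $a_1,\ldots,a_i$, and the reduced-word bookkeeping shows $T^\sigma_w=(\mathrm{id}_A\otimes T^\sigma_{w'})\circ(\beta_{i,1}\otimes\mathrm{id}_A^{\otimes j-1})$ with $w'\in\mathfrak{S}_{i,j-1}$; again no merge touches the front, and the induction hypothesis yields the middle term. Third, for class (c): here $b_1$ is brought adjacent to $a_1$ (contributing the same $\beta_{i-1,1}$ block, now applied after pushing $b_1$ past $a_2,\ldots,a_i$) and then $m$ merges $a_1$ with $b_1$; one checks $m_{\mathrm{cp}(S)}\circ T^\sigma_w=(m\otimes T^\sigma_{(w',S')})\circ(\mathrm{id}_A\otimes\beta_{i-1,1}\otimes\mathrm{id}_A^{\otimes j-1})$ with $(w',S')\in\overline{\mathfrak{S}}_{i-1,j-1}$, matching the third term.

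I expect the main obstacle to be the bookkeeping in the bijection of class (c) together with the corresponding braid-lift factorization: one must check carefully that ``the admissible pair at the front is selected'' corresponds, after removing the merged slot, exactly to an arbitrary mixable shuffle in $\overline{\mathfrak{S}}_{i-1,j-1}$ with no loss or double-counting, and that the composition $\mathrm{cp}(S)$ decomposes as $(2)$ concatenated with $\mathrm{cp}(S')$ under the length shift, so that $m_{\mathrm{cp}(S)}=m\otimes m_{\mathrm{cp}(S')}$ after the reindexing. The delicate point is that the admissible-pair condition $w^{-1}(k)\le i<w^{-1}(k+1)$ is defined relative to the cut between the first $i$ and last $j$ letters, so deleting one letter from each side shifts this cut consistently; verifying this compatibility, and simultaneously that the reduced expression for $w$ chosen to compute $T^\sigma_w$ can be taken in the factored form demanded above (using that all the braid relations needed are instances of the Yang--Baxter equation and the well-definedness of $T^\sigma_w$ cited from \cite{KT}), is where the real work lies. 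Once these three lemmas are in place, summing the three classes and invoking the inductive formula for $\Join_\sigma$ completes the proof.
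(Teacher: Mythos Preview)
Your proposal is essentially the same approach as the paper's own proof: induction on $i+j$, with the inductive step carried out by partitioning $\overline{\mathfrak{S}}_{i,j}$ into three classes according to what sits in position $1$ and whether $(1,2)\in S$, then matching each class against one of the three terms in the recursive definition of $\Join_\sigma$ via explicit bijections with the smaller mixable-shuffle sets. The paper writes these classes as $\mathcal{S}_1,\mathcal{S}_2,\mathcal{S}_3$ and records the bijections $w\mapsto 1_{\mathfrak{S}_1}\times w$, $w\mapsto (1_{\mathfrak{S}_1}\times w)\circ(\chi_{i,1}\times 1)$, and $w\mapsto (1_{\mathfrak{S}_2}\times w)\circ(1_{\mathfrak{S}_1}\times\chi_{i-1,1}\times 1)$, noting that each composite is reduced so that $T^\sigma$ factors as required --- exactly the braid-lift factorizations you anticipate.

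One small imprecision to fix: your class~(a) as stated (``$w^{-1}(1)=1$'') is not disjoint from class~(c), since $(1,2)$ can be admissible when $w(1)=1$ (precisely when also $w(i+1)=2$). You need to add the condition $(1,2)\notin S$ to~(a) explicitly; your parenthetical ``and is not merged'' shows you have the right picture, but it does not follow from $w^{-1}(1)=1$ alone.
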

\begin{proof}We use induction on $i+j$.

When $i=j=1$,
$$a_1\Join_\sigma a_2=m(a_1\otimes a_2)+a_1\otimes a_2+\sigma(a_1\otimes a_2).$$
On the other hand,
$\overline{\mathfrak{S}}_{1,1}=\{(1_{\mathfrak{S}_2},\emptyset),
(1_{\mathfrak{S}_2},\{(1,2)\}), (s_1,\emptyset)\}$, where $s_1$ is
the generator of $\mathfrak{S}_2$. So the formula holds.

We assume that the formula is true in the case $\leq i+j$. By the
inductive formula of quantum quasi-shuffles, we have that
\begin{eqnarray*}\lefteqn{(a_1\otimes\cdots\otimes a_{i+1})\Join_\sigma (a_{i+2}\otimes\cdots\otimes a_{i+j+1})}\\[3pt]
&=&a_1\otimes \big((a_2\otimes\cdots\otimes a_{i+1})\Join_\sigma (a_{i+2}\otimes\cdots\otimes a_{i+j+1})\big)\\
&&+(\mathrm{id}_A\otimes \Join_{\sigma (i+1,j-1)})(\beta_{i+1,1}\otimes \mathrm{id}_A^{\otimes j-1})(a_1\otimes \cdots\otimes a_{i+j+1})\nonumber\\
&&+(m\otimes\Join_{\sigma (i,j-1)} )(\mathrm{id}_A\otimes
\beta_{i,1}\otimes \mathrm{id}_A^{\otimes
j-1})(a_1\otimes \cdots\otimes a_{i+j+1})\\[3pt]
&=&\sum_{(w,S)\in\overline{\mathfrak{S}}_{i,j}}(\mathrm{id}_A\otimes T_{(w,S)}^\sigma)(a_1\otimes \cdots\otimes a_{i+j+1})\\
&&+\sum_{(w,S)\in\overline{\mathfrak{S}}_{i+1,j-1}}(\mathrm{id}_A\otimes T_{(w,S)}^\sigma)(\beta_{i+1,1}\otimes \mathrm{id}_A^{\otimes j-1})(a_1\otimes \cdots\otimes a_{i+j+1})\nonumber\\
&&+\sum_{(w,S)\in\overline{\mathfrak{S}}_{i,j-1}}(m\otimes
T_{(w,S)}^\sigma )(\mathrm{id}_A\otimes \beta_{i,1}\otimes
\mathrm{id}_A^{\otimes j-1})(a_1\otimes \cdots\otimes
a_{i+j+1})\\[3pt]
&=&\sum_{(w,S)\in\overline{\mathfrak{S}}_{i,j}}(\mathrm{id}_A\otimes m_{\textrm{cp}(S)})T_{1_{\mathfrak{S}_1}\times w}^\sigma(a_1\otimes \cdots\otimes a_{i+j+1})\\
&&+\sum_{(w,S)\in\overline{\mathfrak{S}}_{i+1,j-1}}(\mathrm{id}_A\otimes m_{\textrm{cp}(S)})T_{(1_{\mathfrak{S}_1}\times w)\circ (\chi_{i+1,1}\times 1_{\mathfrak{S}_{j-1}})}^\sigma(a_1\otimes \cdots\otimes a_{i+j+1})\nonumber\\
&&+\sum_{(w,S)\in\overline{\mathfrak{S}}_{i,j-1}}(m\otimes
m_{\textrm{cp}(S)} )T^\sigma_{(1_{\mathfrak{S}_2}\times w)\circ
(1_{\mathfrak{S}_1}\times \chi_{i,1}\times
1_{\mathfrak{S}_{j-1}})}(a_1\otimes \cdots\otimes
a_{i+j+1}),\end{eqnarray*} where the third equality follows from
the fact that all the expressions of the permutations being lifted
are reduced.

Denote $$\mathcal{S}_1=\{(w,S)\in
\overline{\mathfrak{S}}_{i+1,j}|(1,2)\notin S, w(1)=1\},$$
$$\mathcal{S}_2=\{(w,S)\in \overline{\mathfrak{S}}_{i+1,j}|(1,2)\notin S, w(i+2)=1\},$$and
$$\mathcal{S}_3=\{(w,S)\in \overline{\mathfrak{S}}_{i+1,j}|(1,2)\in S\}.$$For any $(i+1,j)$-shuffle $w$, one has either $w(1)=1$ or $w(i+2)=1$.
Therefore $\mathcal{S}_1$, $\mathcal{S}_2$ and $ \mathcal{S}_3$
are mutually disjoint,  and
$\mathfrak{S}_{i+1,j}=\mathcal{S}_1\cup\mathcal{S}_2\cup\mathcal{S}_3$.

We make a further observation. It is easy to see that there is a
one-to-one correspondence between $\mathfrak{S}_{i,j}$ and $\{w\in
\mathfrak{S}_{i+1,j}|w(1)=1\}$ given by $w\mapsto
1_{\mathfrak{S}_1}\times w$ for any $w\in \mathfrak{S}_{i,j}$. So
$$\mathcal{S}_1=\{(1_{\mathfrak{S}_1}\times w,S)|w\in \mathfrak{S}_{i,j}, S\subset\mathcal{T}^{1_{\mathfrak{S}_1}\times w},(1,2)\notin S\}.$$
There is a one-to-one correspondence between
$\mathfrak{S}_{i+1,j-1}$ and $\{w\in
\mathfrak{S}_{i+1,j}|w(i+2)=1\}$ given by  $w\mapsto
\widetilde{w}=(1_{\mathfrak{S}_1}\times w)\circ
(\chi_{i+1,1}\times 1_{\mathfrak{S}_{j-1}})$ for any $w\in
\mathfrak{S}_{i+1,j-1}$. Consequently,
$$\mathcal{S}_2=\{(\widetilde{w},S)|w\in \mathfrak{S}_{i+1,j-1},S\subset\mathcal{T}^{\widetilde{w}},(1,2)\notin S\}.$$
Finally, for any $(w,S)\in \mathcal{S}_3$, we must have that
$w(1)=1$ and $w(i+2)=2$. There is a one-to-one correspondence
between $\mathfrak{S}_{i,j-1}$ and $\{w\in
\mathfrak{S}_{i+1,j}|w(1)=1,w(i+2)=2\}$ given by $w\mapsto
\overline{w}=(1_{\mathfrak{S}_2}\times w)\circ
(1_{\mathfrak{S}_1}\times \chi_{i,1}\times
1_{\mathfrak{S}_{j-1}})$ for any $w\in \mathfrak{S}_{i,j-1}$. So
$$\mathcal{S}_3=\{(\overline{w},S)\in \overline{\mathfrak{S}}_{i+1,j}|w\in \mathfrak{S}_{i,j-1},(1,2)\in S\}.$$

 As a conclusion, the three terms in the final step of the preceding
 computation come from $\mathcal{S}_1$, $\mathcal{S}_2$ and
 $\mathcal{S}_3$ respectively. So we have that $$(a_1\otimes\cdots\otimes a_{i+1})\Join_\sigma
(a_{i+2}\otimes\cdots\otimes
v_{i+j+1})=\sum_{(w,S)\in\overline{\mathfrak{S}}_{i+1,j}}T_{(w,S)}^\sigma(a_1\otimes
\cdots\otimes a_{i+j+1}),$$ which completes the induction.
 \end{proof}

\begin{remark}Let $(A,m)$ be an algebra and $\lambda$ be a scalar in $\mathbb{K}$. Then $(A, \lambda m)$ becomes a braided algebra with respect to the usual flip map. In this case, the formula in Theorem 3.3 coincides with the one of mixable shuffle product introduced in \cite{GK}.\end{remark}

Assume again that $(A,m,\sigma)$ is a braided algebra. We denote
by $S^{qsh}_\sigma(A)$ the subalgebra of $T^{qsh}_\sigma(A)$
generated by $A$. To describe this subalgebra, we need to
introduce some notations.

For a fixed $n\in \mathbb{N}$ and any $w\in \mathfrak{S}_n$, we
denote $$\mathcal{S}^w=\{(k,k+1)|1\leq k<n,
w^{-1}(k)<w^{-1}(k+1)\},$$and
$$\overline{\mathfrak{S}}_n=\{(w,S)|w\in \mathfrak{S}_n, S\subset
\mathcal{S}^w\}.$$For any $(w,S)\in \overline{\mathfrak{S}}_n$, we
associate to $S$ a composition $\textrm{cp}(S) $ of $n$ as
follows. Let $S=\{(k_1,k_1+1), \ldots, (k_s,k_s+1)\}$ with
$k_1<\cdots<k_s$. We divide $\{k_1,\ldots,k_s\}$ into several
subsets
$$\{k_1,\ldots,k_{i_1}\}, \{k_{i_1+1},\ldots,k_{i_1+i_2}\},
\ldots,\{k_{i_1+\cdots+i_{r-1}+1},\ldots,k_s\},$$ which obey the
rule that:
\[\left\{
\begin{array}{lll}
k_1+1=k_2,k_2+1=k_3,\ldots,k_{i_1-1}+1=k_{i_1},&&\\
k_{i_1+1}+1=k_{i_1+2},k_{i_1+2}+1=k_{i_1+3},\ldots,k_{i_1+i_2-1}+1=k_{i_2},&&\\
\ldots,&&\\
k_{i_1+\cdots+i_{r-1}+1}+1=k_{i_1+\cdots+i_{r-1}+2},\ldots,
k_{s-1}+1=k_{s},&&
\end{array} \right.
\]
 but $k_{i_1}+1<k_{i_1+1},
k_{i_1+i_2}+1<k_{i_1+i_2+1},\ldots,
k_{i_1+\cdots+i_{r-1}}+1<k_{i_1+\cdots+i_{r-1}+1}$. Denote
$i_r=s-i_1-\cdots-i_{r-1}$. Then we write
 $$\textrm{cp}(S)=(\underbrace{1,\ldots\ldots,1}_{k_1-1\ \textrm{copies}}, i_1+1,\underbrace{1,\ldots\ldots\ldots\ldots,1}_{k_{i_1+1}-k_{i_1}-1\ \textrm{copies}},i_2+1,\ldots,i_r+1,\underbrace{1,\ldots\ldots\ldots,1}_{n-k_{i_r}-1\ \textrm{copies}}).$$
We define as before the map
$T_{(w,S)}^\sigma=m_{\textrm{cp}(S)}\circ T_w^\sigma$ for any
$(w,S)\in\overline{\mathfrak{S}}_n$.

Now we provide a decomposition of $\overline{\mathfrak{S}}_{n+1}$
which will be used later. For any $1\leq i\leq n+1$, we denote
$\mathfrak{S}_{n+1}(i)=\{w\in \mathfrak{S}_{n+1}|w(1)=i\}$. It is
clear that $\mathfrak{S}_{n+1}$ is the disjoint union of all
$\mathfrak{S}_{n+1}(i)$'s, and for each $i$ there is a one-to-one
correspondence between $\mathfrak{S}_{n}$ and
$\mathfrak{S}_{n+1}(i)$ given by $w\mapsto
L(w,i)=(\chi_{1,i-1}\times
1_{\mathfrak{S}_{n+1-i}})\circ(1_{\mathfrak{S}_1}\times w) $ for
any $w\in \mathfrak{S}_{n-1}$. So
\begin{eqnarray*}\mathfrak{S}_{n+1}&=&\bigcup_{i=1}^{n+1}\mathfrak{S}_{n+1}(i)\\[3pt]
&=&\bigcup_{i=1}^{n+1}\bigcup_{w\in
\mathfrak{S}_{n}}\{L(w,i)\}\\[3pt]
&=&\bigcup_{w\in
\mathfrak{S}_{n}}\bigcup_{i=1}^{n+1}\{L(w,i)\}.\end{eqnarray*}
Then we have that
\begin{eqnarray*}\overline{\mathfrak{S}}_{n+1}&=&\bigcup_{w\in
\mathfrak{S}_{n}}\bigcup_{i=1}^{n+1}\{(L(w,i),S)|S\subset \mathcal{S}^{L(w,i)}\}\\[3pt]
&=&\big(\bigcup_{w\in
\mathfrak{S}_{n}}\bigcup_{i=1}^{n+1}\{(L(w,i),S)|S\subset
\mathcal{S}^{L(w,i)},(i,i+1)\notin
S\}\big)\\[3pt]
&&\cup\big(\bigcup_{w\in
\mathfrak{S}_{n}}\bigcup_{i=1}^{n+1}\{(L(w,i),S)|S\subset
\mathcal{S}^{L(w,i)},(i,i+1)\in S\}\big).\end{eqnarray*} All the
unions above are disjoint.

Given $w\in \mathfrak{S}_n$ and $S\subset \mathcal{S}^w$ with
$\mathrm{cp}(S)=(i_1,\ldots,i_s)$. For any $0\leq k\leq s$, we
denote
$$\mathrm{cp}(S)_k=(i_1,\ldots,i_k,1,i_{k+1},\ldots,i_s),$$
$$\mathrm{cp}(S)^k=(i_1,\ldots,i_{k-1},i_k+1,i_{k+1},\ldots,i_s),$$
and $$I_k=(\underbrace{1,\ldots,1}_{k\
\textrm{copies}},2,\underbrace{1,\ldots\ldots\ldots,1}_{n-1-|S|-k\
\textrm{copies}}).$$Here, $|S|$ denotes the cardinality of the set
$S$.

\begin{lemma}Under the assumptions above, we have \[\left\{
\begin{array}{lll}
(\beta_{1,k}\otimes \mathrm{id}_A^{\otimes
n-|S|-k})(\mathrm{id}_A\otimes
T_{(w,S)}^\sigma)&=&m_{\mathrm{cp}(S)_k}T_{L(w,i_1+\cdots+i_k+1)},\\[5pt]
m_{I_k}(\beta_{1,k}\otimes \mathrm{id}_A^{\otimes
n-|S|-k})(\mathrm{id}_A\otimes
T_{(w,S)}^\sigma)&=&m_{\mathrm{cp}(S)^k}T_{L(w,i_1+\cdots+i_k+1)}.
\end{array} \right.
\]\end{lemma}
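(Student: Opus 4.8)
The plan is to deduce the second identity from the first, and to obtain the first by commuting the braiding past the block multiplications one block at a time. Recall that $\mathrm{cp}(S)=(i_1,\dots,i_s)$ is a composition of $n$ with $s=n-|S|$ parts and that $T^\sigma_{(w,S)}=m_{\mathrm{cp}(S)}\circ T^\sigma_w$. For the reduction, post-compose the first identity with $m_{I_k}$: by associativity of $m$ (in the form $m(\mathrm{id}_A\otimes m^{p-1})=m^p$), the operator $m_{I_k}$ merely fuses the inserted $m^0$-block of $\mathrm{cp}(S)_k$ with its neighbour, so $m_{I_k}\circ m_{\mathrm{cp}(S)_k}=m_{\mathrm{cp}(S)^k}$, and the second identity follows at once from the first.

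The technical core is an elementary ``move past a block'' formula valid in any braided algebra: for every $p\ge 0$,
\[\sigma\,(\mathrm{id}_A\otimes m^p)=(m^p\otimes\mathrm{id}_A)\,T^\sigma_{\chi_{1,p+1}}\qquad\text{on }A^{\otimes p+2},\]
which I would prove by induction on $p$. The case $p=0$ is $T^\sigma_{s_1}=\sigma$, and $p=1$ is precisely the second axiom of a braided algebra, $\sigma(\mathrm{id}_A\otimes m)=(m\otimes\mathrm{id}_A)\sigma_2\sigma_1$, once one records $\chi_{1,2}=s_2s_1$. For $p\ge2$, write $m^p=m(\mathrm{id}_A\otimes m^{p-1})$, use the $p=1$ case, and push the surviving copy of $m^{p-1}$ through $\sigma_2\sigma_1$: it commutes past $\sigma_1$ by disjointness of supports, while $\sigma_2\,(\mathrm{id}_A^{\otimes2}\otimes m^{p-1})=\mathrm{id}_A\otimes\sigma(\mathrm{id}_A\otimes m^{p-1})$ is rewritten by the induction hypothesis; the braid operators reassemble via $(1_{\mathfrak{S}_1}\times\chi_{1,p})\circ s_1=\chi_{1,p+1}$, with lengths adding up.

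Iterating this yields a ``move past $k$ blocks'' formula, proved by induction on $k$ by splitting $\beta_{1,k}=\sigma_k\circ(\beta_{1,k-1}\otimes\mathrm{id}_A)$, applying the hypothesis to the first $k-1$ blocks and the single-block formula to the last one (the braids recombining via $(1_{\mathfrak{S}_{N}}\times\chi_{1,i_k})\circ(\chi_{1,N}\times1_{\mathfrak{S}_{i_k}})=\chi_{1,N+i_k}$ with $N=i_1+\cdots+i_{k-1}$):
\[\beta_{1,k}\circ(\mathrm{id}_A\otimes m^{i_1-1}\otimes\cdots\otimes m^{i_k-1})=(m^{i_1-1}\otimes\cdots\otimes m^{i_k-1}\otimes\mathrm{id}_A)\circ T^\sigma_{\chi_{1,\,i_1+\cdots+i_k}}.\]
Then, since $\mathrm{id}_A\otimes T^\sigma_{(w,S)}=m_{(1,i_1,\dots,i_s)}\circ T^\sigma_{1_{\mathfrak{S}_1}\times w}$, applying $\beta_{1,k}\otimes\mathrm{id}_A^{\otimes n-|S|-k}$ and invoking this formula (tensored with the identity on the untouched blocks $i_{k+1},\dots,i_s$) turns $m_{(1,i_1,\dots,i_s)}$ into $m_{\mathrm{cp}(S)_k}$ and contributes the factor $T^\sigma_{\chi_{1,\,i_1+\cdots+i_k}\times1_{\mathfrak{S}_{n+1-i}}}$ with $i=i_1+\cdots+i_k+1$; composing with $T^\sigma_{1_{\mathfrak{S}_1}\times w}$ gives $T^\sigma_{L(w,i)}$ directly from the definition of $L$, which is the first identity.

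The step I expect to cause the most trouble is the bookkeeping of reduced decompositions: wherever two lifts $T^\sigma$ are composed one must check that the underlying reduced words of the relevant $\chi_{1,\bullet}$'s (and of $L(w,\cdot)$) concatenate to a reduced word, equivalently that the lengths add, so that $T^\sigma$ is genuinely multiplicative there. Granting that, everything else is forced by the two braided-algebra axioms and the associativity of $m$.
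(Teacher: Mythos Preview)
Your proposal is correct and follows essentially the same route as the paper. The paper's proof invokes the identity $\sigma(\mathrm{id}_A\otimes m^l)=(m^l\otimes\mathrm{id}_A)\beta_{1,l+1}$ by citing Lemma~2 of \cite{B} rather than proving it, and then carries out in one line the same ``commute $\beta_{1,k}$ past the first $k$ blocks of $m_{\mathrm{cp}(S)}$'' computation that you spell out as your iterated move-past-blocks formula; it likewise deduces the second identity from the first. Your explicit attention to the reduced-word bookkeeping is warranted: the paper relies on it tacitly in passing from $(\beta_{1,i_1+\cdots+i_k}\otimes\mathrm{id})(\mathrm{id}\otimes T^\sigma_w)$ to $T^\sigma_{L(w,i_1+\cdots+i_k+1)}$.
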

\begin{proof}Since $\sigma(\mathrm{id}_A\otimes m^l)=(m^l\otimes
\mathrm{id}_A)\beta_{1,l+1}$ for any $l$ (see Lemma 2 in
\cite{B}), \begin{eqnarray*}\lefteqn{(\beta_{1,k}\otimes
\mathrm{id}_A^{\otimes n-|S|-k})(\mathrm{id}_A\otimes
T_{(w,S)}^\sigma)}\\[3pt]
&=&(\beta_{1,k}\otimes \mathrm{id}_A^{\otimes
n-|S|-k})(\mathrm{id}_A\otimes
m_{\textrm{cp}(S)})(\mathrm{id}_A\otimes T_w^\sigma)\\[3pt]
&=&m_{\textrm{cp}(S)_k}(\beta_{1,i_1+\cdots+i_k}\otimes
\mathrm{id}_A^{\otimes i_{k+1}+\cdots+i_s})(\mathrm{id}_A\otimes T_w^\sigma)\\[3pt]
&=&m_{\textrm{cp}(S)_k}T_{(\chi_{1,i_1+\cdots+i_k}\times
1_{\mathfrak{S}_{i_{k+1}+\cdots+i_s}})\circ(1_{\mathfrak{S}_1}\times
w)}^\sigma\\[3pt]
&=&m_{\mathrm{cp}(S)_k}T_{L(w,i_1+\cdots+i_k+1)}.\end{eqnarray*}

The second equality is a consequence of the first one.\end{proof}

\begin{theorem}Let $(A,m,\sigma)$ be a braided algebra. For any $a_1,\ldots,a_n\in A$, we have that$$a_1\Join_\sigma\cdots\Join_\sigma a_n=\sum_{(w,S)\in\overline{\mathfrak{S}}_n}T_{(w,S)}^\sigma(a_1\otimes \cdots\otimes a_{n}).$$ Therefore $S^{qsh}_\sigma(A)=\sum_{n\geq 0}\mathrm{Im}(\sum_{(w,S)\in\overline{\mathfrak{S}}_n}T_{(w,S)}^\sigma)$.\end{theorem}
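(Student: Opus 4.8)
The plan is to prove the product formula by induction on $n$ and then to read off the description of $S^{qsh}_\sigma(A)$ as an immediate corollary. The base case $n=1$ is clear, since $\overline{\mathfrak{S}}_1=\{(1_{\mathfrak{S}_1},\emptyset)\}$, $\mathrm{cp}(\emptyset)=(1)$ and hence $T^\sigma_{(1_{\mathfrak{S}_1},\emptyset)}=\mathrm{id}_A$. For the inductive step, assuming the formula for products of $n$ factors, I would invoke the associativity of $\Join_\sigma$ to write $a_1\Join_\sigma\cdots\Join_\sigma a_{n+1}=a_1\Join_\sigma\bigl(a_2\Join_\sigma\cdots\Join_\sigma a_{n+1}\bigr)$ and substitute the induction hypothesis into the bracket. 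By bilinearity this reduces the task to evaluating $a_1\Join_\sigma$ on each term $T^\sigma_{(w,S)}(a_2\otimes\cdots\otimes a_{n+1})$, which lies in $A^{\otimes s}$ with $s=n-|S|\ge 1$.

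The computational engine for this is the case $i=1$ of Theorem 3.3. Since both sides are linear in the second argument and Theorem 3.3 gives the identity on decomposable tensors, one has $a_1\Join_\sigma x=\sum_{(v,R)\in\overline{\mathfrak{S}}_{1,s}}T^\sigma_{(v,R)}(a_1\otimes x)$ for every $x\in A^{\otimes s}$, and $\overline{\mathfrak{S}}_{1,s}$ is completely explicit: a $(1,s)$-shuffle just records the slot $m\in\{0,\ldots,s\}$ past which the first tensor factor is carried (its braid lift being $\beta_{1,m}\otimes\mathrm{id}_A^{\otimes s-m}$), and it possesses exactly one admissible pair when $0\le m\le s-1$ and none when $m=s$, the selection of that pair contributing one extra multiplication $m_{I_m}$ with $I_m$ as in Lemma 3.5. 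Taking $x=T^\sigma_{(w,S)}(a_2\otimes\cdots\otimes a_{n+1})$, this writes $a_1\Join_\sigma T^\sigma_{(w,S)}(a_2\otimes\cdots\otimes a_{n+1})$ as $\sum_{m=0}^{s}(\beta_{1,m}\otimes\mathrm{id}_A^{\otimes s-m})(\mathrm{id}_A\otimes T^\sigma_{(w,S)})$ plus $\sum_{m=0}^{s-1}m_{I_m}(\beta_{1,m}\otimes\mathrm{id}_A^{\otimes s-m})(\mathrm{id}_A\otimes T^\sigma_{(w,S)})$, all applied to $a_1\otimes\cdots\otimes a_{n+1}$. Lemma 3.5 then turns each of these operators into a single map $T^\sigma_{(w',S')}$: writing $\mathrm{cp}(S)=(i_1,\ldots,i_s)$ and $i=i_1+\cdots+i_m+1$, the terms of the first sum become $T^\sigma_{(L(w,i),S')}$ with $\mathrm{cp}(S')=\mathrm{cp}(S)_m$ and $(i,i+1)\notin S'$, while the terms of the second sum become $T^\sigma_{(L(w,i),S'')}$ with $\mathrm{cp}(S'')=\mathrm{cp}(S)^{m+1}$ and $(i,i+1)\in S''$.

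What then remains is to sum over all $(w,S)\in\overline{\mathfrak{S}}_n$ and all admissible $m$ and to check that the pairs produced this way enumerate $\overline{\mathfrak{S}}_{n+1}$ without repetition or omission; this is exactly what the decomposition of $\overline{\mathfrak{S}}_{n+1}$ recorded just before Lemma 3.5 provides. Indeed, from a pair $(w',S')\in\overline{\mathfrak{S}}_{n+1}$ one recovers $i=w'(1)$, then $w$ via the bijection $w\mapsto L(w,i)$, and then $m$ from the relation $i_1+\cdots+i_m+1=i$, the two families matching the two pieces of the decomposition according to whether $(i,i+1)$ lies in the chosen subset. I expect this combinatorial bookkeeping — confirming that the insertion-type and multiplication-type terms biject, respectively, with the $(i,i+1)\notin S$ and $(i,i+1)\in S$ parts of $\overline{\mathfrak{S}}_{n+1}$, each carrying the correct composition — to be the only genuine difficulty; everything around it is a mechanical substitution of the inductive definition of $\Join_\sigma$, Theorem 3.3 and Lemma 3.5. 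The conclusion is $a_1\Join_\sigma\cdots\Join_\sigma a_{n+1}=\sum_{(w',S')\in\overline{\mathfrak{S}}_{n+1}}T^\sigma_{(w',S')}(a_1\otimes\cdots\otimes a_{n+1})$, which closes the induction.

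Finally, the description of $S^{qsh}_\sigma(A)$ is immediate: by definition $S^{qsh}_\sigma(A)$ is the linear span of all products $a_1\Join_\sigma\cdots\Join_\sigma a_n$ with $n\ge 0$ and $a_1,\ldots,a_n\in A$, and the formula just established exhibits each such product as an element of $\mathrm{Im}\bigl(\sum_{(w,S)\in\overline{\mathfrak{S}}_n}T^\sigma_{(w,S)}\bigr)$; conversely, any element of that image is, by linearity, a linear combination of such products, hence lies in $S^{qsh}_\sigma(A)$. Therefore $S^{qsh}_\sigma(A)=\sum_{n\ge 0}\mathrm{Im}\bigl(\sum_{(w,S)\in\overline{\mathfrak{S}}_n}T^\sigma_{(w,S)}\bigr)$.
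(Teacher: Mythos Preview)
Your proposal is correct and follows essentially the same route as the paper: induction on $n$, splitting off the leftmost factor, applying the $(1,s)$-case of Theorem~3.3 to $a_1\Join_\sigma T^\sigma_{(w,S)}(a_2\otimes\cdots\otimes a_{n+1})$, invoking Lemma~3.5 to rewrite each resulting operator as a single $T^\sigma_{(w',S')}$, and then matching the outcome against the decomposition of $\overline{\mathfrak{S}}_{n+1}$ recorded just before Lemma~3.5. The only cosmetic differences are your starting the induction at $n=1$ rather than $n=2$ and a harmless shift in the superscript of $\mathrm{cp}(S)^{\bullet}$ in the multiplication-type terms.
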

\begin{proof}We use induction on $n$.

When $n=2$, it is trivial since
$$\overline{\mathfrak{S}}_2=\{(1_{\mathfrak{S}_2},\emptyset),(1_{\mathfrak{S}_2},\{(1,2)\}),(s_1,\emptyset)
\}.$$ By Theorem 3.5, we have that for any $a_1,\ldots,a_{r+1}\in
A$,
\begin{eqnarray*}\lefteqn{a_1\Join_\sigma (a_2\otimes\cdots\otimes a_{r+1})}\\[3pt]
&=&\sum_{k=0}^r(\beta_{1,k}\otimes \mathrm{id}_A^{\otimes r-k})(a_1\otimes \cdots\otimes a_{r+1})\\[3pt]
&&+\sum_{k=0}^{r-1}(\mathrm{id}_A^{\otimes k}\otimes m\otimes
\mathrm{id}_A^{\otimes r-k-1})(\beta_{1,k}\otimes
\mathrm{id}_A^{\otimes r-k})(a_1\otimes \cdots\otimes
a_{r+1}).\end{eqnarray*} Therefore,
\begin{eqnarray*}\lefteqn{a_1\Join_\sigma\cdots\Join_\sigma
a_{n+1}}\\[3pt]
&=&a_1\Join_\sigma\big(\sum_{(w,S)\in\overline{\mathfrak{S}}_n}T_{(w,S)}^\sigma(a_2\otimes
\cdots\otimes a_{n+1})\big)\\[3pt]
&=&\sum_{(w,S)\in\overline{\mathfrak{S}}_n}\sum_{k=0}^{n-|S|}(\beta_{1,k}\otimes
\mathrm{id}_A^{\otimes n-|S|-k})\big(a_1\otimes
T_{(w,S)}^\sigma(a_2\otimes
\cdots\otimes a_{n+1})\big)\\[3pt]
&&+\sum_{(w,S)\in\overline{\mathfrak{S}}_n}\sum_{k=0}^{n-|S|-1}(\mathrm{id}_A^{\otimes
k}\otimes m\otimes \mathrm{id}_A^{\otimes
n-|S|-k-1})\\[3pt]
&&\ \ \ \ \ \ \ \ \ \ \ \ \ \ \ \ \ \ \  \ \ \ \ \ \ \ \ \
\circ(\beta_{1,k}\otimes \mathrm{id}_A^{\otimes
n-|S|-k})\big(a_1\otimes T_{(w,S)}^\sigma(a_2\otimes \cdots\otimes
a_{n+1})\big)\\[3pt]
&=&\sum_{(w,S)\in\overline{\mathfrak{S}}_n}\sum_{k=0}^{n-|S|}m_{\textrm{cp}(S)_k}T_{L(w,i_1+\cdots+i_k+1)}^\sigma(a_1\otimes
\cdots\otimes a_{n+1})\\[3pt]
&&+
\sum_{(w,S)\in\overline{\mathfrak{S}}_n}\sum_{k=0}^{n-|S|-1}m_{\textrm{cp}(S)^k}T_{L(w,i_1+\cdots+i_k+1)}^\sigma(a_1\otimes
\cdots\otimes a_{n+1}),\end{eqnarray*} where the last equality
follows from the preceding lemma.

On the other hand, by the decomposition of
$\overline{\mathfrak{S}}_{n+1}$ mentioned before,
\begin{eqnarray*}\lefteqn{\sum_{(w,S)\in\overline{\mathfrak{S}}_{n+1}}T_{(w,S)}^\sigma(a_1\otimes
\cdots\otimes a_{n+1})}\\[3pt]
&=&\sum_{w\in\mathfrak{S}_n}\sum_{i=1}^{n+1}\sum_{\substack{S\subset
\mathcal{S}^{L(w,i)}\\(i,i+1)\notin S}}T_{(L(w,i),S)}^\sigma
(a_1\otimes
\cdots\otimes a_{n+1})\\[3pt]
&&+\sum_{w\in\mathfrak{S}_n}\sum_{i=1}^{n+1}\sum_{\substack{S\subset
\mathcal{S}^{L(w,i)}\\(i,i+1)\in
S}}T_{(L(w,i),S)}^\sigma(a_1\otimes \cdots\otimes
a_{n+1}).\end{eqnarray*}We compare the terms in these two
expressions. Notice that for a fixed $1\leq i\leq n+1$ and
$S\subset \mathcal{S}^{L(w,i)}$ with $(i,i+1)\notin S$, there is a
unique $S'\in \mathcal{S}^w$ such that
$\textrm{cp}(S)=\textrm{cp}(S')_i$. Indeed, we can write down
$S^\prime$ explicitly: if $S=\{(k_1,k_1+1), \ldots, (k_s,k_s+1)\}$
with $k_1<\cdots<k_l<i<k_{l+1}<\cdots<k_s$, then
$S'=\{(k_1,k_1+1), \ldots,
(k_l,k_l+1),(k_{l+1}-1,k_{l+1}),\ldots,(k_s-1,k_s)\}$. Similarly,
if $(i,i+1)\in S$, there is a unique $S''\in \mathcal{S}^w$ such
that $\textrm{cp}(S)=\textrm{cp}(S'')^i$. It follows that every
term in
$\sum_{(w,S)\in\overline{\mathfrak{S}}_{n+1}}T_{(w,S)}^\sigma(a_1\otimes
\cdots\otimes a_{n+1})$ is from exactly one term in the formula of
$a_1\Join_\sigma\cdots\Join_\sigma a_{n+1}$. The converse is also
true. Since all terms in each formula are mutually distinct, we
get the conclusion.
\end{proof}

\begin{remark}Consider Example 3.2 and let $(\mathfrak{X},[,],\sigma)$ be the braided algebra introduced there. For any $w\in \mathfrak{S}_n$, we denote $\iota(w)=\{(i,j)|1\leq i<j\leq n, w(i)>w(j)\}$. Then for any $a_1,\cdots, a_n\in X$, $$T_w^\sigma(a_1\otimes\cdots\otimes a_n)=q^{\sum_{(i,j)\in \iota(w)}|a_i||a_j|}a_{w^{-1}(1)}\otimes \cdots \otimes a_{w^{-1}(n)}.$$

For any two compositions $I=(i_1,\ldots,i_k)$ and
$J=(j_1,\ldots,j_l)$ of $n$, we say $I$ is a \emph{refinement} of
$J$, written by $I\succeq J$, if there are $r_1,\ldots,r_l\in
\mathbb{N}$ such that $r_1+\cdots +r_l=k$ and
 $$i_1+\cdots+i_{r_1}=j_1, i_{r_1+1}+\cdots+i_{r_1+r_2}=j_2,\ldots, i_{r_1+\cdots+r_{l-1}+1}+\cdots+i_k=j_l.$$
 For instance, $(1,2,2,3)\succeq (3,2,3)$. For any $w\in \mathfrak{S}_n$, let $C(w)$ be the composition $(i_1,\ldots,i_k)$ of $n$ such that $$\{i_1,i_1+i_2,\ldots, i_1+\cdots+i_{k-1}\}=\{l|1\leq l\leq n-1, w(l)>w(l+1)\}.$$
For any $I=(i_1,\ldots,i_l)\in \mathcal{C}(n)$, we write
$I[a_1\otimes \cdots a_n]=[,]_I(a_1\otimes \cdots a_n)$.

Observing that for any $w\in \mathfrak{S}_n$ there is a one-to-one
correspondence between $S\subset \mathcal{S}^w$ and $I\in
\mathcal{C}(n)$ with $I\succeq C(w)$, one has immediately that
$$a_1\ast_q\cdots\ast_q
a_n=\sum_{w\in\mathfrak{S}_n}q^{\sum_{(i,j)\in
\iota(w)}|a_i||a_j|}\sum_{I\succeq C(w)}I[a_{w^{-1}(1)}\otimes
\cdots \otimes a_{w^{-1}(n)}].$$ This formula is given by Hoffman
when $[,]$ is commutative (see Lemma 5.2 in \cite{Hof2}).
\end{remark}

We conclude this section by an interesting formula. Let $V$ be a
vector space with basis $\{e_i\}_{i\in \mathbb{N}}$, and
$(V,m,\sigma)$ be the braided algebra structure given in Example
2.3. So we have that $\sigma(e_i\otimes e_j)=q_{ij}e_j\otimes e_i$
and $m(e_i\otimes e_j)=e_{i+j}$.

For fixed $i$, we denote $(0)_{q_{ii}}=1$ and
$(n)_{q_{ii}}=1+q_{ii}+q_{ii}^2+\cdots+q_{ii}^{n-1}=\frac{1-q_{ii}^n}{1-q_{ii}}$
when $n\in \mathbb{N}$. We denote
$(n)_{q_{ii}}!=(1)_{q_{ii}}\cdots (n)_{q_{ii}}$.

\begin{proposition}For nay $i,k\in \mathbb{N}$, we have that $$e_i^{\Join_\sigma k}=\sum_{n=1}^k\sum_{\substack{l_1+\cdots+l_n=k\\1\leq l_1,\ldots,l_n\leq k}}\frac{(k)_{q_{ii}}!}{(l_1)_{q_{ii}}!\cdots (l_n)_{q_{ii}}!}e_{l_1i}\otimes \cdots \otimes e_{l_ni}.$$\end{proposition}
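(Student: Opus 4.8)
The plan is to specialize Theorem 3.6 to the case $a_1=\cdots=a_k=e_i$ and then regroup the resulting sum over $\overline{\mathfrak{S}}_k$ according to compositions of $k$. Theorem 3.6 gives
$$e_i^{\Join_\sigma k}=\sum_{(w,S)\in\overline{\mathfrak{S}}_k}m_{\mathrm{cp}(S)}\,T_w^\sigma(e_i^{\otimes k}),$$
so the first task is to evaluate both operators on the pure tensor $e_i^{\otimes k}$. Since $\sigma(e_i\otimes e_i)=q_{ii}\,e_i\otimes e_i$, every braiding generator $\sigma_j$ multiplies $e_i^{\otimes k}$ by $q_{ii}$ while fixing it, so choosing a reduced word for $w$ gives $T_w^\sigma(e_i^{\otimes k})=q_{ii}^{\ell(w)}\,e_i^{\otimes k}$, where $\ell(w)$ is the number of inversions of $w$; this is of course independent of the reduced word chosen, consistently with the well-definedness of $T_w^\sigma$. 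Next, if $\mathrm{cp}(S)=(l_1,\ldots,l_n)$ then $m_{\mathrm{cp}(S)}=m^{l_1-1}\otimes\cdots\otimes m^{l_n-1}$, and since $m^{l-1}(e_i^{\otimes l})=e_{li}$ we get $m_{\mathrm{cp}(S)}(e_i^{\otimes k})=e_{l_1i}\otimes\cdots\otimes e_{l_ni}$. Hence $T_{(w,S)}^\sigma(e_i^{\otimes k})=q_{ii}^{\ell(w)}\,e_{l_1i}\otimes\cdots\otimes e_{l_ni}$ with $(l_1,\ldots,l_n)=\mathrm{cp}(S)$.

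Next I would collect terms according to their target tensor. The map $S\mapsto\mathrm{cp}(S)$ is a bijection from the subsets of $\{(1,2),\ldots,(k-1,k)\}$ onto the compositions of $k$: a set of these "edges" merges $\{1,\ldots,k\}$ into consecutive blocks, whose sizes, read from left to right, form the composition. Write $S_I$ for the unique subset with $\mathrm{cp}(S_I)=I$. Then
$$e_i^{\Join_\sigma k}=\sum_{\substack{I=(l_1,\ldots,l_n)\\ \text{a composition of }k}}\Big(\sum_{\substack{w\in\mathfrak{S}_k\\ S_I\subset\mathcal{S}^w}}q_{ii}^{\ell(w)}\Big)\,e_{l_1i}\otimes\cdots\otimes e_{l_ni}.$$
Because $\mathcal{S}^w=\{(l,l+1):w^{-1}(l)<w^{-1}(l+1)\}$, putting $v=w^{-1}$ (which ranges over all of $\mathfrak{S}_k$ with $\ell(v)=\ell(w)$) turns $S_I\subset\mathcal{S}^w$ into the condition that $v$ be increasing on each of the $n$ consecutive intervals of lengths $l_1,\ldots,l_n$. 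These $v$ are exactly the minimal-length representatives of the left cosets of the Young subgroup $W_I=\mathfrak{S}_{l_1}\times\cdots\times\mathfrak{S}_{l_n}$ (permuting within the blocks) in $\mathfrak{S}_k$.

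Finally, the classical parabolic factorization $\mathfrak{S}_k=\bigsqcup_v v\,W_I$ over these representatives, with $\ell(vw)=\ell(v)+\ell(w)$ for $w\in W_I$, together with $\sum_{x\in\mathfrak{S}_m}q_{ii}^{\ell(x)}=(m)_{q_{ii}}!$ and $\sum_{w\in W_I}q_{ii}^{\ell(w)}=(l_1)_{q_{ii}}!\cdots(l_n)_{q_{ii}}!$, evaluates the inner sum as
$$\sum_{\substack{w\in\mathfrak{S}_k\\ S_I\subset\mathcal{S}^w}}q_{ii}^{\ell(w)}=\frac{\sum_{x\in\mathfrak{S}_k}q_{ii}^{\ell(x)}}{\sum_{w\in W_I}q_{ii}^{\ell(w)}}=\frac{(k)_{q_{ii}}!}{(l_1)_{q_{ii}}!\cdots(l_n)_{q_{ii}}!}.$$
Substituting this and observing that summing over all compositions of $k$ is the same as summing over $1\leq n\leq k$ and $l_1+\cdots+l_n=k$ with each $l_j\geq 1$ yields the asserted formula. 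Apart from Theorem 3.6 and the explicit forms of $\sigma$ and $m$ on $e_i$, the only non-routine ingredient is this last (standard) $q$-factorial computation; the step most deserving of care is the translation of the condition $S_I\subset\mathcal{S}^w$ into the "increasing on each block" description of $v=w^{-1}$. One could alternatively derive the proposition from the Hoffman-type formula of Remark 3.7 applied to the braided subalgebra generated by $e_i$.
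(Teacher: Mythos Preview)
Your argument is correct. The paper's own proof is a one-line pointer (``direct verification by using induction and Theorem 3.6 or Hoffman's formula''), so your write-up actually supplies the details the paper omits: you take the Theorem 3.6 route, but instead of an induction on $k$ you regroup the sum $\sum_{(w,S)\in\overline{\mathfrak{S}}_k}$ by the composition $\mathrm{cp}(S)$ and identify the coefficient of each $e_{l_1i}\otimes\cdots\otimes e_{l_ni}$ as the $q$-multinomial via the parabolic factorization $\mathfrak{S}_k=\bigsqcup_v v\,W_I$. This is a clean, non-inductive variant of the paper's intended approach; the alternative you mention at the end (applying the Hoffman-type formula of Remark 3.7) is exactly the second route the paper alludes to.
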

\begin{proof}It is a direct verification by using induction and Theorem 3.6 or Hoffman's formula.\end{proof}

\section{The dual construction}
In this section, we give a dual construction of the quantum
quasi-shuffle algebra by using the universal property of tensor
algebras. First of all, we study a special coalgebra structure on
$T(C)$ which is a generation of the quantized cofree coalgebra
structure. For coalgebras, we adopt Sweedler's notation. That
means for a coalgebra $(C,\bigtriangleup,\varepsilon)$ and any
$c\in C$, we denote
$$\bigtriangleup(c)=\sum_{(c)}c_{(1)}\otimes c_{(2)},$$
or simply, $\bigtriangleup(c)=c_{(1)}\otimes c_{(2)}$.

To extend algebra structures to quantized case, one needs the
notion of braided algebras. By contrast, in the case of
coalgebras, one needs the so-called braided coalgebras.

\begin{definition}Let $C=(C,\bigtriangleup,\varepsilon)$ be a coalgebra with coproduct $\bigtriangleup$ and counit $\varepsilon$, and $\sigma$ be a braiding on $C$. We call $(C,\bigtriangleup,\sigma)$ a \emph{braided coalgebra} if it satisfies the following conditions:
\[\left\{
\begin{array}{lll}
(\mathrm{id}_C\otimes \bigtriangleup)\sigma&=&\sigma_1\sigma_2(
\bigtriangleup\otimes
\mathrm{id}_C),\\[3pt] ( \bigtriangleup\otimes
\mathrm{id}_C)\sigma&=&\sigma_2\sigma_1(\mathrm{id}_C\otimes
\bigtriangleup).
\end{array} \right.
\]

\end{definition}

In order to get new braided algebras and coalgebras from old ones,
we need the proposition below.
\begin{proposition}[\cite{HH}, Proposition 4.2]1.
For a braided algebra $(A,\mu,\sigma)$ and any $i\in \mathbb{N}$,
$(A^{\otimes i},\mu_{\sigma,i}, \beta_{ii})$ becomes a braided
algebra with product $\mu_{\sigma,i}=\mu^{\otimes i}\circ
T^\sigma_{w_i}$, where $w_i\in \mathfrak{S}_{2i}$ is given by
\[w_{i}=\left(\begin{array}{ccccccccc}
1&2&3&\cdots&i&i+1&i+2&\cdots & 2i\\
1&3&5&\cdots&2i-1&2& 4 &\cdots & 2i
\end{array}\right).\]

2. For a braided coalgebra $(C, \bigtriangleup,\sigma)$,
$(C^{\otimes i},\bigtriangleup_{\sigma,i}, \beta_{ii})$ becomes a
braided coalgebra with coproduct $\bigtriangleup_{\sigma,i}=
T^\sigma_{w_i^{-1}}\circ\bigtriangleup^{\otimes i}$ and counit
$\varepsilon^{\otimes i}:C^{\otimes i}\rightarrow
\mathbb{K}^{\otimes i}\simeq \mathbb{K}$.\end{proposition}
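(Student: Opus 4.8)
The plan is to recognise $(A^{\otimes i},\mu_{\sigma,i},\beta_{ii})$ as the $i$-fold braided tensor product of the braided algebra $(A,\mu,\sigma)$ with itself and to argue by induction on $i$; the case $i=1$ is the hypothesis, since $w_1$ is the identity of $\mathfrak{S}_2$, so that $\mu_{\sigma,1}=\mu$ and $\beta_{11}=\sigma$. Unwinding the permutation $w_i$ shows that $\mu_{\sigma,i}$ sends $(a_1\otimes\cdots\otimes a_i)\otimes(b_1\otimes\cdots\otimes b_i)$ to the element obtained by first using $\sigma$ to interleave the two strings into the order $a_1,b_1,a_2,b_2,\ldots,a_i,b_i$ and then applying $\mu^{\otimes i}$ to the consecutive pairs; under the identification $A^{\otimes(i+1)}=A^{\otimes i}\,\underline{\otimes}\,A$ this is exactly $(\mu_{\sigma,i}\otimes\mu)(\mathrm{id}\otimes\beta_{i,1}\otimes\mathrm{id})$, the braided tensor product of the braided algebras $(A^{\otimes i},\mu_{\sigma,i},\beta_{ii})$ and $(A,\mu,\sigma)$ glued along the cross-braiding $\beta$. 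It therefore suffices to verify: (a) $\beta_{ii}$ is a braiding on $A^{\otimes i}$; (b) $\mu_{\sigma,i}$ is associative; and (c) the two compatibility identities of Definition 2.1 hold for the pair $(\mu_{\sigma,i},\beta_{ii})$.

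Point (a) is essentially free: by Example 2.5 the triple $(T(A),m,\beta)$ is a braided algebra, so $\beta$ satisfies the quantum Yang--Baxter equation on $T(A)^{\otimes 3}$, and restricting that equation to $A^{\otimes i}\,\underline{\otimes}\,A^{\otimes i}\,\underline{\otimes}\,A^{\otimes i}$ yields precisely the Yang--Baxter equation for $\beta_{ii}$; moreover $\beta_{ii}=T^\sigma_{\chi_{ii}}$ is invertible because it is a product of invertible operators $\sigma_j$. For (b) and (c) I would put each side of each identity into a normal form $m_I\circ T^\sigma_w$ for a suitable composition $I$ and permutation $w$ — exactly the kind of reduction carried out in the proofs of Theorems 3.3 and 3.6 — by repeatedly using the two defining relations of a braided algebra, in the iterated form $\sigma(\mathrm{id}_A\otimes m^l)=(m^l\otimes\mathrm{id}_A)\beta_{1,l+1}$ and its mirror (as in the proof of Lemma 3.5), to push the multiplications past the crossings, together with the associativity of $m$ and the braid relations to reorganise the lifts; the equality of the two resulting maps then follows once one checks that the reduced expressions produced along the way are genuinely reduced. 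Equivalently, and more transparently, everything is an isotopy of string diagrams: draw $A^{\otimes i}$ as $i$ parallel strands, $\mu_{\sigma,i}$ as a layer of block crossings followed by $i$ trivalent merges, and $\beta_{ii}$ as a block crossing; the braided-algebra axioms for $(A,\mu,\sigma)$ say exactly that a merge slides through a crossing, and then associativity, the hexagons and the naturality of $\beta_{ii}$ become planar isotopies. Feeding this (applied to the two factors $A^{\otimes i}$ and $A$) into the inductive step finishes part 1.

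Part 2 is proved by the formally dual argument. Here $\bigtriangleup_{\sigma,i}=T^\sigma_{w_i^{-1}}\circ\bigtriangleup^{\otimes i}$ is the $i$-fold braided tensor product coalgebra; reversing all the string diagrams above — so that the comultiplications are performed first, which is exactly why the \emph{inverse} lift $T^\sigma_{w_i^{-1}}$ occurs — turns the braided-algebra relations used in part 1 into the braided-coalgebra relations of Definition 4.1 for $(C,\bigtriangleup,\sigma)$, and these yield coassociativity of $\bigtriangleup_{\sigma,i}$ together with the two compatibility identities with $\beta_{ii}$. That $\varepsilon^{\otimes i}$ is a counit for $\bigtriangleup_{\sigma,i}$ follows from the counit property of $\varepsilon$ combined with the braided-coalgebra relations and the invertibility of $\sigma$ (the latter is what makes the interleaving crossings disappear after a strand is capped by $\varepsilon$). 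I expect the one genuine obstacle, in both parts, to be the combinatorial bookkeeping: pinning down exactly which permutation occurs on each side of the associativity and hexagon identities and checking that every intermediate word stays reduced, so that nothing beyond the braid relations and the braided-(co)algebra axioms is ever used; once the graphical picture is set up this is a finite, mechanical check.
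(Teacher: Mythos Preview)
The paper does not actually prove this proposition: it is quoted verbatim from \cite{HH} (Proposition~4.2 there) and stated without proof, so there is no ``paper's own proof'' to compare against. Your sketch --- recognising $\mu_{\sigma,i}$ as the $i$-fold braided tensor product, inducting on $i$ via the identification $A^{\otimes(i+1)}=A^{\otimes i}\,\underline{\otimes}\,A$, and reducing the associativity and compatibility checks to isotopies of string diagrams (equivalently, to pushing multiplications past crossings via the iterated relations $\sigma(\mathrm{id}_A\otimes m^l)=(m^l\otimes\mathrm{id}_A)\beta_{1,l+1}$) --- is the standard and correct route, and is essentially how the result is established in the literature on braided multilinear algebra. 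The dual argument for part~2 is likewise sound. The only caveat is the one you yourself flag: the ``finite, mechanical check'' that the permutation words stay reduced is where all the content lives, and in a written proof you would need to carry it out explicitly rather than assert it.
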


Let $(C,\bigtriangleup,\sigma)$ be a braided coalgebra. Consider
the tensor algebra $T(C)$ with the concatenation product $m$. Then
by
 the above proposition, $\mathscr{T} _\beta^2(C)=(T(C)\underline{\otimes}
 T(C),m_{\beta,2})$ and $\mathscr{T} _\beta^3=(T(C)\underline{\otimes}
 T(C)\underline{\otimes}
 T(C),m_{\beta,3})$ are associative algebras.

 We define
\[\begin{array}{cccc}
\phi_1: &C&\rightarrow& T(C)\underline{\otimes}
 T(C)
,\\[6pt]
&c&\mapsto&1\underline{\otimes} c+ c\underline{\otimes}1.
\end{array}\]

By the universal property of tensor algebras, there exists an
algebra map $\Phi_1: T(C)\rightarrow \mathscr{T} _\beta^2(C)$
whose restriction on $C$ is $\phi_1$. Moreover $\Phi_1$ is
coassociative and is the dual of quantum shuffle product (see,
e.g., \cite{FG}).

Now we define
\[\begin{array}{cccc}
\phi_2: &C&\rightarrow& T(C)\underline{\otimes}
 T(C)
,\\[6pt]
&c&\mapsto&\sum c_{(1)}\underline{\otimes} c_{(2)}.
\end{array}\]

By the universal property of tensor algebras again, there exists
an algebra map $\Phi_2: T(C)\rightarrow \mathscr{T} _\beta^2(C)$
whose restriction on $C$ is $\phi_2$.

\begin{proposition}For any $i\in \mathbb{N}$, we have that $\Phi_2\mid_{C^{\otimes
i}}=\bigtriangleup_{\sigma,i}$. So $(T(C), \Phi_2, \beta)$ is a
braided coalgebra.\end{proposition}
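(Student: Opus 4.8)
The plan is to prove by induction on $i$ that $\Phi_2\mid_{C^{\otimes i}}=\bigtriangleup_{\sigma,i}$, and then to obtain the braided coalgebra structure from Proposition 4.2(2) by a degreewise argument. For $i=1$ one has $w_1=1_{\mathfrak{S}_2}$, hence $\bigtriangleup_{\sigma,1}=T_{w_1^{-1}}^\sigma\circ\bigtriangleup^{\otimes 1}=\bigtriangleup=\phi_2=\Phi_2\mid_C$. For the inductive step, write $c_1\otimes\cdots\otimes c_{i+1}=(c_1\otimes\cdots\otimes c_i)\cdot c_{i+1}$ for the concatenation product in $T(C)$. Since $\Phi_2$ is an algebra map into $\mathscr{T}_\beta^2(C)$, and by the induction hypothesis $\Phi_2(c_1\otimes\cdots\otimes c_i)=\bigtriangleup_{\sigma,i}(c_1\otimes\cdots\otimes c_i)$, the element $\Phi_2(c_1\otimes\cdots\otimes c_{i+1})$ equals the product in $\mathscr{T}_\beta^2(C)$ of $\bigtriangleup_{\sigma,i}(c_1\otimes\cdots\otimes c_i)$ and $\bigtriangleup(c_{i+1})$.

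The next step is to unwind $m_{\beta,2}$. By Proposition 4.2(1) for the braided algebra $(T(C),m,\beta)$ one has $m_{\beta,2}=m^{\otimes 2}\circ T_{w_2}^\beta$, and since $w_2=s_2$ this is $m^{\otimes 2}\circ\beta_2$, where $\beta_2$ applies $\beta$ to the second and third legs of $T(C)^{\underline{\otimes}4}$. Now $\bigtriangleup_{\sigma,i}(c_1\otimes\cdots\otimes c_i)$ lies in $C^{\otimes i}\underline{\otimes} C^{\otimes i}$ and $\bigtriangleup(c_{i+1})$ in $C\underline{\otimes} C$, so the relevant restriction of $\beta$ is $\beta_{i,1}=T_{\chi_{i,1}}^\sigma$; since concatenation leaves the underlying element of $C^{\otimes(2i+2)}$ untouched (only its splitting into two factors), and using $\bigtriangleup_{\sigma,i}=T_{w_i^{-1}}^\sigma\circ\bigtriangleup^{\otimes i}$, we obtain
\[\Phi_2(c_1\otimes\cdots\otimes c_{i+1})=T_{1_{\mathfrak{S}_i}\times\chi_{i,1}\times 1_{\mathfrak{S}_1}}^\sigma\circ T_{w_i^{-1}\times 1_{\mathfrak{S}_2}}^\sigma\circ\bigtriangleup^{\otimes(i+1)}(c_1\otimes\cdots\otimes c_{i+1}),\]
an element of $C^{\otimes(i+1)}\underline{\otimes} C^{\otimes(i+1)}$.

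It remains to identify the right-hand side with $\bigtriangleup_{\sigma,i+1}=T_{w_{i+1}^{-1}}^\sigma\circ\bigtriangleup^{\otimes(i+1)}$. I would do this via the combinatorial identity
\[(1_{\mathfrak{S}_i}\times\chi_{i,1}\times 1_{\mathfrak{S}_1})\circ(w_i^{-1}\times 1_{\mathfrak{S}_2})=w_{i+1}^{-1}\qquad\text{in }\mathfrak{S}_{2i+2},\]
which is checked by following the image of each of the $2i+2$ letters (equivalently by comparing one-line notations), using $w_n(k)=2k-1$, $w_n(n+k)=2k$ and $\chi_{i,1}=s_1s_2\cdots s_i$. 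Moreover, writing $\ell$ for the length, $\ell(\chi_{i,1})+\ell(w_i)=i+\binom{i}{2}=\binom{i+1}{2}=\ell(w_{i+1})$, so the concatenation of the reduced expressions on the left is reduced and therefore $T_{1_{\mathfrak{S}_i}\times\chi_{i,1}\times 1_{\mathfrak{S}_1}}^\sigma\circ T_{w_i^{-1}\times 1_{\mathfrak{S}_2}}^\sigma=T_{w_{i+1}^{-1}}^\sigma$. Hence the displayed expression equals $\bigtriangleup_{\sigma,i+1}(c_1\otimes\cdots\otimes c_{i+1})$, which closes the induction.

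Finally, $\Phi_2\mid_{C^{\otimes i}}=\bigtriangleup_{\sigma,i}$ has image in $C^{\otimes i}\underline{\otimes} C^{\otimes i}$, and the restriction of $\beta$ to $C^{\otimes i}\underline{\otimes} C^{\otimes i}$ is $\beta_{ii}$. By Proposition 4.2(2), $(C^{\otimes i},\bigtriangleup_{\sigma,i},\beta_{ii})$ is a braided coalgebra for each $i$; therefore coassociativity of $\Phi_2$, the counit axiom (with counit $T(C)\to\mathbb{K}$ restricting to $\varepsilon^{\otimes i}$ on $C^{\otimes i}$), and the two compatibility conditions between $\Phi_2$ and $\beta$ from Definition 4.1 all hold on each homogeneous component $C^{\otimes i}$, hence on $T(C)=\bigoplus_{i\ge 0}C^{\otimes i}$. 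So $(T(C),\Phi_2,\beta)$ is a braided coalgebra. The one genuinely delicate point I expect is the bookkeeping in the inductive step: keeping straight the conventions for the action of $T_w^\sigma$ (placing $a_{w^{-1}(k)}$ in the $k$-th slot), the order in which $\beta_{i,1}$ is composed with the already-present lift $T_{w_i^{-1}}^\sigma$, and the shift/padding notation, so that the product in $\mathscr{T}_\beta^2(C)$ indeed reproduces $T_{w_{i+1}^{-1}}^\sigma$; the base case, the combinatorial identity, and the degreewise passage to $T(C)$ are then routine.
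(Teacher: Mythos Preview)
Your proof is correct and follows essentially the same inductive strategy as the paper: the only difference is that you split off the rightmost tensor factor $c_{i+1}$ and use the identity $(1_{\mathfrak{S}_i}\times\chi_{i,1}\times 1_{\mathfrak{S}_1})(w_i^{-1}\times 1_{\mathfrak{S}_2})=w_{i+1}^{-1}$, whereas the paper splits off the leftmost factor $c_1$ and uses $(1_{\mathfrak{S}_1}\times\chi_{1,n-1}\times 1_{\mathfrak{S}_{n-1}})(1_{\mathfrak{S}_2}\times w_{n-1}^{-1})=w_n^{-1}$. One small caution: Proposition~4.2(2) only literally gives the compatibility of $\bigtriangleup_{\sigma,i}$ with $\beta_{ii}$, so your degreewise deduction of the braided-coalgebra axioms for $(T(C),\Phi_2,\beta)$ needs the (easy, standard) extension to $\beta_{ij}$ for $i\neq j$---the paper simply asserts this without further comment.
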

\begin{proof}We use induction on $i$. When $i=1$, it is trivial. We assume the equality holds for the case $i< n$.
Then for any $c_1,\ldots,c_n\in C$, we have
\begin{eqnarray*}\lefteqn{\Phi_2(c_1\otimes \cdots \otimes
c_n)}\\[3pt]
&=&m_{\beta,2}\big(\Phi_2(c_1)\otimes\Phi_2(c_2\otimes \cdots \otimes c_n)\big)\\[3pt]
&=&(\mathrm{id}_{T(C)}\otimes \beta \otimes \mathrm{id}_{T(C)})(\bigtriangleup(c_1)\otimes T^\sigma_{w_{n-1}^{-1}}\circ\bigtriangleup^{\otimes n-1}(c_2\otimes \cdots \otimes c_n)\big)\\[3pt]
&=&(\mathrm{id}_C\otimes T^\sigma_{\chi_{1,n-1}}\otimes
\mathrm{id}_C^{\otimes n-1})(\mathrm{id}_C^{\otimes 2}\otimes
T^\sigma_{w_{n-1}^{-1}})\bigtriangleup^{\otimes n}(c_1\otimes
\cdots \otimes
c_n)\\[3pt]
&=&T^\sigma_{w_n^{-1}}\circ\bigtriangleup^{\otimes n}(c_2\otimes
\cdots \otimes c_n),\end{eqnarray*}where the last equality follows
from the fact that $(1_{\mathfrak{G}_1 }\times\chi_{1,n-1}\times
1_{\mathfrak{G}_{}n-1 })(1_{\mathfrak{G}_2 }\times
w_{n-1}^{-1})=w_{n}^{-1}$ and the expression is
reduced.\end{proof}

Let $\phi=\phi_1+\phi_2: C\rightarrow\mathscr{T} _\beta^2(C)$ and
$\Phi$ be the algebraic map induced by the universal property of
tensor algebras which extends $\phi$.

\begin{proposition}Under the notation above, the triple $(T(C), \Phi, \beta)$ is a braided coalgebra.
\end{proposition}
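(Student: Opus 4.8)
The plan is to show that $\Phi$ makes $T(C)$ into a coalgebra with braiding $\beta$, i.e.\ that $\Phi$ is coassociative, counital (with the obvious counit sending $1\mapsto 1$ and $C^{\otimes i}\to 0$ for $i\ge 1$), and satisfies the two braided-coalgebra compatibility conditions with $\beta$. The key structural observation is that $\Phi$, being the algebra map $T(C)\to\mathscr{T}^2_\beta(C)$ extending $\phi=\phi_1+\phi_2$, is uniquely determined by its value on generators; so every identity I must verify is an identity between algebra maps out of $T(C)$ (for coassociativity, out of $T(C)$ into $\mathscr{T}^3_\beta(C)$), and hence it suffices to check it on $C$. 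This reduces each global claim to a finite computation on a single tensorand $c\in C$.

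First I would treat \textbf{coassociativity}. Both $(\Phi\underline{\otimes}\,\mathrm{id})\Phi$ and $(\mathrm{id}\,\underline{\otimes}\,\Phi)\Phi$ are algebra maps $T(C)\to\mathscr{T}^3_\beta(C)$ — here one uses Proposition~4.2 (the cited \cite{HH} result) to know $\mathscr{T}^3_\beta(C)$ is an associative algebra and that $\Phi\underline{\otimes}\mathrm{id}$, $\mathrm{id}\underline{\otimes}\Phi$ are algebra maps between the relevant $\beta$-twisted tensor algebras, exactly as in the argument that $\Phi_1$ and $\Phi_2$ are individually coassociative. So by the universal property it is enough to evaluate both sides on $c\in C$. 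On generators $\phi(c)=1\underline{\otimes}c+c\underline{\otimes}1+\sum c_{(1)}\underline{\otimes}c_{(2)}$, and applying $\Phi$ again to each tensorand (using $\Phi|_C=\phi$ and $\Phi(1)=1$) one gets, after expanding, the sum
$1\underline{\otimes}1\underline{\otimes}c+1\underline{\otimes}c\underline{\otimes}1+c\underline{\otimes}1\underline{\otimes}1
+\sum(1\underline{\otimes}c_{(1)}\underline{\otimes}c_{(2)}+c_{(1)}\underline{\otimes}1\underline{\otimes}c_{(2)}+c_{(1)}\underline{\otimes}c_{(2)}\underline{\otimes}1)
+\sum c_{(1)}\underline{\otimes}c_{(2)}\underline{\otimes}c_{(3)}$, which is symmetric between the two bracketings precisely because $\bigtriangleup$ is coassociative on $C$ and $\Phi_1,\Phi_2$ are each coassociative (the latter being Proposition~4.4). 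The counit property is immediate on generators and extends by the universal property since the counit is an algebra map.

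The remaining, and \textbf{main, point} is the two braided-coalgebra identities
$(\mathrm{id}\otimes\Phi)\beta=\beta_1\beta_2(\Phi\otimes\mathrm{id})$ and $(\Phi\otimes\mathrm{id})\beta=\beta_2\beta_1(\mathrm{id}\otimes\Phi)$ on $T(C)\underline{\otimes}T(C)$. The strategy here is again to exploit multiplicativity: both sides of each identity are, in an appropriate sense, morphisms of algebras built from $m$, $\Phi_1$, $\Phi_2$ and $\beta$, and the identity is already known for $\phi_1$ in place of $\phi$ (that is the statement that $\Phi_1$ is "the dual of the quantum shuffle product", cf.\ \cite{FG}) and for $\phi_2$ in place of $\phi$ (that is Proposition~4.4, which says $(T(C),\Phi_2,\beta)$ is a braided coalgebra). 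Since $\phi=\phi_1+\phi_2$ and $\Phi$ is the unique algebra extension, $\Phi$ is compatible with $\beta$ as soon as one checks the compatibility on generators $c\in C$ and then propagates it through products using the hexagon/Yang–Baxter relations for $\beta$ together with the braided-algebra axioms for $(T(C),m,\beta)$ from Example~2.7; the generator check is just the two defining identities of Definition~4.1 for $(C,\bigtriangleup,\sigma)$ plus the trivial compatibility $\sigma(c\otimes 1_{\mathbb K})=1_{\mathbb K}\otimes c$ built into $\beta_{0i},\beta_{i0}$. I expect the bookkeeping of the braid-group lifts $T^\sigma_w$ in pushing the generator-level identity up to arbitrary tensor degree — showing the relevant composite permutations are reduced, exactly as in the proofs of Theorem~3.3 and Proposition~4.4 — to be the only genuinely laborious step, and the one where care with reduced expressions is essential.
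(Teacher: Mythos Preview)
Your overall strategy is the same as the paper's, and the ingredients you list are exactly the right ones: check everything on generators $c\in C$, then propagate via the universal property of $T(C)$, using the braided-algebra structure $(T(C),m,\beta)$ and the Yang--Baxter relation for $\beta$.

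There is, however, a genuine logical gap in the \emph{order} in which you run the argument. You do coassociativity first and justify the reduction to generators by asserting that $\Phi\underline{\otimes}\mathrm{id}$ and $\mathrm{id}\underline{\otimes}\Phi$ are algebra maps $\mathscr{T}^2_\beta(C)\to\mathscr{T}^3_\beta(C)$, citing Proposition~4.2. But Proposition~4.2 only tells you that $\mathscr{T}^k_\beta(C)$ is an associative algebra; it does \emph{not} say that tensoring an algebra map with $\mathrm{id}_{T(C)}$ remains an algebra map for the $\beta$-twisted products. For that you need precisely the identity $(\Phi\otimes\mathrm{id})\beta=\beta_2\beta_1(\mathrm{id}\otimes\Phi)$, i.e.\ the braided-coalgebra compatibility of $\Phi$ with $\beta$ that you have deferred to the second step. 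In the paper's proof this dependence is explicit: the computation showing that $(\Phi\otimes\mathrm{id}_{T(C)})\Phi$ is an algebra morphism into $\mathscr{T}^3_\beta(C)$ uses the step
\[
(\Phi\otimes\mathrm{id}_{T(C)})\beta\;=\;\beta_2\beta_1(\mathrm{id}_{T(C)}\otimes\Phi),
\]
which is established \emph{first}, by induction on the tensor degree of the left factor. Your appeal to ``exactly as in the argument that $\Phi_1$ and $\Phi_2$ are individually coassociative'' does not save you here, because $\Phi$ is not $\Phi_1+\Phi_2$ on $C^{\otimes n}$ for $n\ge 2$; it is the multiplicative extension of their sum on $C$, and its compatibility with $\beta$ is not inherited from theirs without the inductive argument.

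The fix is simply to swap the order: establish $\beta_1\beta_2(\Phi\otimes\mathrm{id}_{T(C)})=(\mathrm{id}_{T(C)}\otimes\Phi)\beta$ and its mirror by the induction you sketch (base case on $C$, inductive step via the hexagon relations and multiplicativity of $\Phi$), and then use this to show $(\Phi\otimes\mathrm{id}_{T(C)})\Phi$ and $(\mathrm{id}_{T(C)}\otimes\Phi)\Phi$ are both algebra maps $T(C)\to\mathscr{T}^3_\beta(C)$, reducing coassociativity to the generator computation you wrote out. With that reordering your proposal matches the paper's proof.
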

\begin{proof}We first show that
\[\left\{
\begin{array}{lll}
\beta_1\beta_2 (\Phi\otimes \mathrm{id}_{T(C)})&=& (\mathrm{id}_{T(C)}\otimes \Phi)\beta,\\[3pt]
\beta_2\beta_1(\mathrm{id}_{T(C)}\otimes \Phi)&=&(\Phi\otimes
\mathrm{id}_{T(C)})\beta.
\end{array} \right.
\]
For any $x\in C^{\otimes i}$ and $y\in C^{\otimes j}$ we verify
the first one on $x\underline{\otimes}y$. The second one can be
verified similarly. We use induction on $i$.

When $i=1$,
\begin{eqnarray*}\beta_1\beta_2 (\Phi\otimes \mathrm{id}_{T(C)})(x\underline{\otimes}y)&=&\beta_1\beta_2 (\phi_1\otimes \mathrm{id}+\phi_2\otimes \mathrm{id})(x\underline{\otimes}y)\\[3pt]
&=&(\mathrm{id}\otimes \phi_1+\mathrm{id}\otimes \phi_2)\beta(x\underline{\otimes}y)\\[3pt]
&=& (\mathrm{id}_{T(C)}\otimes
\Phi)\beta(x\underline{\otimes}y).\end{eqnarray*} For any $c\in
C$, we have
\begin{eqnarray*}\beta_1\beta_2 (\Phi\otimes \mathrm{id}_{T(C)})\big((c\otimes x)\underline{\otimes}y\big)&=&\beta_1\beta_2\beta_3\beta_4\beta_2(\Phi\otimes \Phi\otimes\mathrm{id}_{T(C)})(v\underline{\otimes} x\underline{\otimes}y)\\[3pt]
&=&\beta_1\beta_3\beta_2\beta_3\beta_4(\Phi\otimes \Phi\otimes\mathrm{id}_{T(C)})(c\underline{\otimes} x\underline{\otimes}y)\\[3pt]
&=&\beta_1\beta_3\beta_2\big(\Phi\otimes \beta_1\beta_2(\Phi\otimes\mathrm{id}_{T(C)})\big)(c\underline{\otimes} x\underline{\otimes}y)\\[3pt]
&=&\beta_1\beta_3\beta_2(\Phi\otimes \mathrm{id}_{T(C)}\otimes\Phi)\beta_2(v\underline{\otimes} x\underline{\otimes}y)\\[3pt]
&=&\beta_3\big(\beta_1\beta_2(\Phi\otimes \mathrm{id}_{T(C)})\otimes\Phi\big)\beta_2(c\underline{\otimes} x\underline{\otimes}y)\\[3pt]
&=&\beta_3(\mathrm{id}_{T(C)}\otimes \Phi\otimes\Phi)\beta_1\beta_2(c\underline{\otimes} x\underline{\otimes}y)\\[3pt]
&=&(\mathrm{id}_{T(C)}\otimes \Phi)\beta\big((c\otimes
x)\underline{\otimes}y\big).\end{eqnarray*}

The next step is to show $(\Phi\otimes
\mathrm{id}_{T(C)})\Phi=(\mathrm{id}_{T(C)}\otimes \Phi)\Phi$.
Notice that for any $c\in C$,
\begin{eqnarray*}\lefteqn{(\Phi\otimes \mathrm{id}_{T(C)})\Phi(c)}\\[3pt]
&=&(\Phi\otimes \mathrm{id}_{T(C)})(c_{(1)}\underline{\otimes} c_{(2)}+1\underline{\otimes} c+c\underline{\otimes} 1)\\[3pt]
&=&c_{(1)}\underline{\otimes} c_{(2)}\underline{\otimes}
c_{(3)}+1\underline{\otimes} c_{(1)}\underline{\otimes} c_{(2)}+
c_{(1)}\underline{\otimes} c_{(2)}\underline{\otimes}
1\\[3pt]
&&+c_{(1)}\underline{\otimes}1 \underline{\otimes}
c_{(2)}+1\underline{\otimes} 1
\underline{\otimes}c+c_{(1)}\underline{\otimes} c_{(2)}\underline{\otimes} 1+1\underline{\otimes}c \underline{\otimes}1+c\underline{\otimes}1\underline{\otimes}1\\[3pt]
&=&(\mathrm{id}_{T(C)}\otimes \Phi)\Phi(c).\end{eqnarray*} By the
uniqueness of the universal property of $T(C)$, we only need to
show that both $(\Phi\otimes \mathrm{id}_{T(C)})\Phi$ and
$(\mathrm{id}_{T(C)}\otimes \Phi)\Phi$ are algebra morphisms from
$T(C)$ to $\mathscr{T} _\beta^3$. Since $\Phi: T(C)\rightarrow
\mathscr{T} _\beta^2(C)$ is an algebra morphism, we have that
$$\Phi\circ m=(m\otimes m)(\mathrm{id}_{T(C)}\otimes \beta
\otimes \mathrm{id}_{T(C)})(\Phi\otimes \Phi).$$ So
\begin{eqnarray*}
\lefteqn{(\Phi\otimes \mathrm{id}_{T(C)})\circ\Phi\circ
m}\\[3pt]
&=&(\Phi\otimes \mathrm{id}_{T(C)})(m\otimes
m)(\mathrm{id}_{T(C)}\otimes \beta \otimes
\mathrm{id}_{T(C)})(\Phi\otimes \Phi)\\[3pt]
&=&\big((\Phi\circ m)\otimes m\big)(\mathrm{id}_{T(C)}\otimes
\beta \otimes
\mathrm{id}_{T(C)})(\Phi\otimes \Phi)\\[3pt]
&=&(m\otimes m \otimes m)(\mathrm{id}_{T(C)}\otimes \beta \otimes
\mathrm{id}_{T(C)} \otimes \mathrm{id}_{T(C)}\otimes
\mathrm{id}_{T(C)})\\[3pt]
&&\circ(\Phi\otimes \Phi  \otimes \mathrm{id}_{T(C)} \otimes
\mathrm{id}_{T(C)})(\mathrm{id}_{T(C)}\otimes \beta \otimes
\mathrm{id}_{T(C)})(\Phi\otimes \Phi)\\[3pt]
&=&(m\otimes m\otimes m) \beta_2\\[3pt]
&&\circ\big(\mathrm{id}_{T(C)} \otimes \mathrm{id}_{T(C)}\otimes(\Phi\otimes \mathrm{id}_{T(C)}) \beta \otimes \mathrm{id}_{T(C)}\big)\big((\Phi\otimes \mathrm{id}_{T(C)})\Phi\otimes \Phi\big)\\[3pt]
&=&(m\otimes m\otimes
m)\beta_2\beta_4\beta_3\\[3pt]
&&\circ(\mathrm{id}_{T(C)}\otimes \mathrm{id}_{T(C)}\otimes
\mathrm{id}_{T(V)}\otimes \Phi\otimes
\mathrm{id}_{T(C)})\big((\Phi\otimes \mathrm{id}_{T(C)})\Phi\otimes \Phi\big)\\[3pt]
&=&m_{\beta,3}\big((\Phi\otimes \mathrm{id}_{T(C)})\Phi\otimes(
\Phi_2\otimes \mathrm{id}_{T(C)})\Phi\big).
\end{eqnarray*}
It follows that $(\Phi\otimes \mathrm{id}_{T(C)})\Phi$ is an
algebra morphism. Similarly, the map $(\mathrm{id}_{T(C)}\otimes
\Phi)\Phi$ is also an algebra morphism.
\end{proof}

Now we begin to study the relation between the braided coalgebra
$(T(C), \Phi, \beta)$ and the quantum quasi-shuffle algebra. We
show that they are dual to each other in the following sense.

Let $<,>: V\times W\rightarrow \mathbb{K}$ and
$<,>^\prime:V^\prime\times W^\prime\rightarrow \mathbb{K}$ be two
bilinear non-degenerate forms. For any $f\in \mathrm{Hom}(V,
V^\prime)$, the adjoint operator $\mathrm{adj}(f)\in
\mathrm{Hom}(W^\prime, W)$ of $f$ is defined to be the one such
that $<x,\mathrm{adj}(f)(y)>=<f(x),y>^\prime$ for any $x\in V$ and
$y\in W^\prime$. It is clear that $\mathrm{adj}(f\circ
g)=\mathrm{adj}(g)\circ\mathrm{adj}(f)$.

\begin{remark}If there is a non-degenerate bilinear form between two vector spaces $A$ and $C$, and $(A,m,\sigma)$ is a braided algebra, then
$(C,\mathrm{adj}(m),\mathrm{adj}(\sigma))$ is a braided coalgebra.
The converse is also true.\end{remark}

From now on, we assume that there always exists a non-degenerate
bilinear form $<,>$ between two vector spaces $A$ and $C$. It can
be extended to a bilinear form $<,>: A^{\otimes n}\times
C^{\otimes n}\rightarrow \mathbb{K}$ for any $n \geq 1$ in the
usual way: for any $a_1,\ldots, a_n\in A$ and $c_1,\ldots,c_n\in
C$,
$$<a_1\otimes \cdots \otimes a_n, c_1\otimes \cdots \otimes
c_n>=\prod_{i=1}^n <a_i, c_i>.$$ It induces a non-degenerate
bilinear form  $<,>: T(A)\times T(C)\rightarrow \mathbb{K}$ by
setting that $<x,y>=0$ for any $x\in A^{\otimes i}$ , $y\in
C^{\otimes j}$ and $i\neq j$. Then we can define a non-degenerate
bilinear form $<,>: T(A)\underline{\otimes}T(A)\times
T(C)\underline{\otimes}T(C)\rightarrow \mathbb{K}$ by requiring
that $<u\underline{\otimes} v,x\underline{\otimes}y>=<u,x><v,y>$
for any $u,v\in T(A)$ and $x,y\in T(C)$.

If $(C,\bigtriangleup,\sigma)$ is a braided coalgebra, we denote
$\tau=\mathrm{adj}(\sigma)$ and $\alpha=\mathrm{adj}(\beta)$. Then
$\alpha$ is a braiding on $T(A)$ and
$\alpha_{i,j}=T^\tau_{\chi_{ji}^{-1}}=T^\tau_{\chi_{ij}}$.

\begin{theorem}Under the assumptions above, we have that $\mathrm{adj}(\Phi)=\Join_\sigma$.\end{theorem}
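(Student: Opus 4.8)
The plan is to show that $\mathrm{adj}(\Phi)$ obeys the very inductive formula that \emph{defines} $\Join_\sigma$ on $T(A)$ (here $\Join_\sigma$ is the quantum quasi-shuffle product of the braided algebra $(A,m,\tau)$, so in that recursion the braiding on $T(\cdot)$ is $\alpha=\mathrm{adj}(\beta)$), and to conclude by non-degeneracy of the pairing; the induction is on the total degree $i+j$, evaluating on $A^{\otimes i}\underline{\otimes}A^{\otimes j}$. The input is that, since $\Phi\colon T(C)\to\mathscr{T}_\beta^2(C)$ is an algebra morphism from the concatenation product to $m_{\beta,2}=(m\otimes m)(\mathrm{id}_{T(C)}\otimes\beta\otimes\mathrm{id}_{T(C)})$, for any word of $T(C)$ one has
$$\Phi(c_1\otimes\cdots\otimes c_n)=m_{\beta,2}\big(\phi(c_1)\otimes\Phi(c_2\otimes\cdots\otimes c_n)\big),\qquad\phi(c)=1\underline{\otimes}c+c\underline{\otimes}1+c_{(1)}\underline{\otimes}c_{(2)}.$$
The degenerate cases ($i=0$ or $j=0$) follow from unitality of $\Phi$ and the fact that $\beta$ restricts to the flip against $1$, giving $\mathrm{adj}(\Phi)$ the canonical identifications $\mathbb{K}\underline{\otimes}T(A)\cong T(A)\cong T(A)\underline{\otimes}\mathbb{K}$; together with the conventions on $\beta_{0,k}$, $\beta_{k,0}$ and on $\Join_{\sigma(i,0)}$, $\Join_{\sigma(0,j)}$ these also absorb the $i=j=1$ case (one reads off $\mathrm{adj}(\Phi)|_{A\underline{\otimes}A}=m+\mathrm{id}_{A\otimes A}+\tau$ directly from $\phi$).

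For the inductive step, fix $a=a_1\otimes a'\in A^{\otimes i}$, $b=b_1\otimes b'\in A^{\otimes j}$, pair $\mathrm{adj}(\Phi)(a\underline{\otimes}b)$ against an arbitrary word $c_1\otimes\cdots\otimes c_{i+j}$ of $T(C)$, and insert the recursion above with $\phi(c_1)$ expanded into its three summands; write $\Phi(c_2\otimes\cdots\otimes c_{i+j})=u\underline{\otimes}v$ (summation understood). Applying $m_{\beta,2}$: the summand $c_1\underline{\otimes}1$ produces $(c_1\otimes u)\underline{\otimes}v$; the summand $1\underline{\otimes}c_1$ produces $u^{[1]}\underline{\otimes}(c_1^{[2]}\otimes v)$ with $\beta(c_1\underline{\otimes}u)=u^{[1]}\underline{\otimes}c_1^{[2]}$; and the summand $c_{1(1)}\underline{\otimes}c_{1(2)}$ produces $(c_{1(1)}\otimes u^{[1]})\underline{\otimes}(c_{1(2)}^{[2]}\otimes v)$ with $\beta(c_{1(2)}\underline{\otimes}u)=u^{[1]}\underline{\otimes}c_{1(2)}^{[2]}$. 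Now I would dualize each piece: the adjoint of a concatenation is a deconcatenation, so each pairing re-splits as a product of pairings in which $a_1$, respectively $b_1$, is peeled off; then I use $\mathrm{adj}(\beta_{1,k})=\alpha_{k,1}$, $\mathrm{adj}(\bigtriangleup)=m$, and the induction hypothesis applied to the shorter pairs $a'\underline{\otimes}b$, $a\underline{\otimes}b'$, $a''\underline{\otimes}b'$ (with $a''=a_2\otimes\cdots\otimes a_i$, or rather to their braided versions). The three pieces then contribute, respectively,
$$a_1\otimes\mathrm{adj}(\Phi)(a'\underline{\otimes}b),\qquad(\mathrm{id}_A\otimes\mathrm{adj}(\Phi))(\alpha_{i,1}\otimes\mathrm{id}_A^{\otimes j-1})(a\underline{\otimes}b),\qquad(m\otimes\mathrm{adj}(\Phi))(\mathrm{id}_A\otimes\alpha_{i-1,1}\otimes\mathrm{id}_A^{\otimes j-1})(a\underline{\otimes}b),$$
which are exactly the three terms of the defining recursion of $\Join_\sigma$. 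By non-degeneracy of the pairing in each bidegree, $\mathrm{adj}(\Phi)$ satisfies that recursion, hence $\mathrm{adj}(\Phi)=\Join_\sigma$.

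Conceptually this is the adjunction between the universal property of the free algebra $T(C)$ — a morphism out of it is determined by its restriction to the generators $C$, which is how $\Phi$ is built — and the co-universal property of $T(A)$ used to define the quantum quasi-shuffle in \cite{JR}, a morphism into which is determined by its projection onto the cogenerators $A$; the product $m_{\beta,2}$ on the target dualizes to the braided tensor-square coproduct of $(T(A),\delta)$, and the three summands $1\underline{\otimes}c$, $c\underline{\otimes}1$, $c_{(1)}\underline{\otimes}c_{(2)}$ of $\phi$ dualize one-to-one to the three summands of the quasi-shuffle recursion (the last via $\mathrm{adj}(\bigtriangleup)=m$). The main delicate point I anticipate is the braiding bookkeeping in the inductive step: one must check that, under $\mathrm{adj}(T^\sigma_w)=T^\tau_{w^{-1}}$, the adjoints of the $\beta$-sandwiched concatenation $m_{\beta,2}$ and of the relevant deconcatenations compose to give precisely the operators $\alpha_{i,1}\otimes\mathrm{id}_A^{\otimes j-1}$ and $\mathrm{id}_A\otimes\alpha_{i-1,1}\otimes\mathrm{id}_A^{\otimes j-1}$ — i.e. that the permutations and their reduced expressions line up — and one must be careful with the convention-laden $i=0$, $j=0$ cases and with checking that the pairing $T(A)\underline{\otimes}T(A)\times T(C)\underline{\otimes}T(C)\to\mathbb{K}$ restricts non-degenerately bidegree-by-bidegree so that $\mathrm{adj}(\Phi)$ is well-defined componentwise.
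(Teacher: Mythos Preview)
Your proposal is correct and follows essentially the same approach as the paper: both arguments exploit that $\Phi$ is an algebra morphism to write $\Phi(c_1\otimes\cdots\otimes c_n)=m_{\beta,2}\big(\phi(c_1)\underline{\otimes}\Phi(c_2\otimes\cdots\otimes c_n)\big)$, expand $\phi(c_1)$ into its three summands, take adjoints term by term (using $\mathrm{adj}(\bigtriangleup)=m$ and $\mathrm{adj}(\beta_{1,k})=\alpha_{k,1}$), and recognize the result as the defining recursion of $\Join_\sigma$ on $(A,\mathrm{adj}(\bigtriangleup),\tau)$. The paper presents this more tersely---writing the three-term formula for $\Phi$ directly and then dualizing componentwise on $A^{\otimes i}\underline{\otimes}A^{\otimes j}$---whereas you frame it as an explicit induction on $i+j$ with a test pairing, but the content is the same.
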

\begin{proof}For any $c_1,\ldots,c_n\in
C$, we notice that
\begin{eqnarray*}\Phi(c_1\otimes \cdots\otimes c_n)&=&m_{\beta,2}\big(\Phi(c_1)\underline{\otimes}\Phi(c_2\otimes \cdots\otimes c_n\big)\\[3pt]
&=&m_{\beta,2}\big((1\underline{\otimes}c_1)\underline{\otimes}\Phi(c_2\otimes \cdots\otimes c_n)\big)\\[3pt]
&&+m_{\beta,2}\big((v_1\underline{\otimes}1)\underline{\otimes}\Phi(c_2\otimes \cdots\otimes c_n)\big)\\[3pt]
&&+m_{\beta,2}\big((c_{1(1)}\underline{\otimes}c_{1
(2)})\underline{\otimes}\Phi(c_2\otimes \cdots\otimes
c_n)\big)\\[3pt]
&=&(\beta_{1,?}\otimes \mathrm{id}_C^{\otimes n-1-?})(\mathrm{id}_C\otimes \Phi)(c_1\otimes \cdots\otimes c_n)\\[3pt]
&&+(\mathrm{id}_C\otimes \Phi)(c_1\otimes \cdots\otimes c_n)\\[3pt]
&&+(\mathrm{id}_C\otimes\beta_{1,?}\otimes \mathrm{id}_C^{\otimes
n-1-?})(\bigtriangleup\otimes \Phi)(c_1\otimes \cdots\otimes
c_n).\end{eqnarray*} Here, since $\Phi(C^{\otimes k})\subset
C^{\otimes 0}\underline{\otimes}C^{\otimes k}+C^{\otimes
k}\underline{\otimes}C^{\otimes 0}+\cdots+C^{\otimes
k}\underline{\otimes}C^{\otimes k}$, we denote by $\beta_{1,?}$
the action of $\beta$ on $C\underline{\otimes}C^{\otimes ?}$ where
$C^{\otimes ?}$ is the left factor of some component in
$\Phi(C^{\otimes k})$.

We denote by $\mathrm{adj}(\Phi)_{(k,l)}$ the action of
$\mathrm{adj}(\Phi)$ on $A^{\otimes
k}\underline{\otimes}A^{\otimes l}$. Then on $A^{\otimes
i}\underline{\otimes}A^{\otimes j}$, we have
\begin{eqnarray*}\mathrm{adj}(\Phi)_{(i,j)}&=&\mathrm{adj}\big((\beta_{1,?}\otimes
\mathrm{id}_C^{\otimes i+j-1-?})(\mathrm{id}_C\otimes
\Phi)+(\mathrm{id}_C\otimes
\Phi)\\[3pt]
&&\ \ \ \ \ \ +(\mathrm{id}_C\otimes\beta_{1,?}\otimes
\mathrm{id}_C^{\otimes i+j-1-?})(\bigtriangleup\otimes \Phi)\big)_{(i,j)}\\[3pt]
&=&\big(\mathrm{id}_A\otimes \mathrm{adj}(\Phi)_{(i,j-1)}\big)(\alpha_{i,1}\otimes \mathrm{id}_A^{\otimes j-1})\\[3pt]
&&+\big(\mathrm{id}_A\otimes \mathrm{adj}(\Phi)_{(i-1,j)}\big)\\[3pt]
&&+\big(\mathrm{adj}(\bigtriangleup)\otimes
\mathrm{adj}(\Phi)_{(i-1,j-1)}\big)(\mathrm{id}_A\otimes\beta_{i-1,1}\otimes
\mathrm{id}_A^{\otimes j-1}).\end{eqnarray*}This shows that the
map $\mathrm{adj}(\Phi)$ shares the same inductive formula with
the quantum quasi-shuffle product built on $(A,
\mathrm{adj}(\bigtriangleup),\tau)$. Hence we have the conclusion.
\end{proof}

\section{Applications}
In the past decade, the connection between Rota-Baxter algebras
and quasi-shuffles was discovered and well studied (see \cite{GK},
\cite{EG} and the references therein). Quasi-shuffle algebras
built on commutative algebras provide free objects in the category
of commutative Rota-Baxter algebras. Later, in \cite{Lod}, it is
showed that quasi-shuffle algebras built on polynomial algebras
are free in the category of commutative tridendriform algebras.
Motivated by these works, here, we use quantum quasi-shuffle
algebras to provide examples of Rota-Baxter algebras and
tridendriform algebras.

We first recall the definition of Rota-Baxter algebras. For more
information, one can see \cite{GK}.

\begin{definition}Let $\lambda$ be an element in $\mathbb{K}$. A pair $(R,P)$ is called a \emph{Rota-Baxter algebra of weight $\lambda$} if $R$ is a $\mathbb{K}$-algebra and $P$ is a linear endomorphism of $R$ satisfying that for any $x,y\in R$, $$P(x)P(y)=P(xP(y))+P(P(x)y)+\lambda P(xy).$$\end{definition}

Let $(A, m,1_A,\sigma)$ be a unital braided algebra. Then
apparently $(A, \lambda\cdot m,\sigma)$ is a braided algebra for
any $\lambda\in \mathbb{K}$. We denote by $\Join_{\sigma,\lambda}$
the quantum quasi-shuffle product with respect to $(A,
\lambda\cdot m,\sigma)$. By Lemma 3 in \cite{Ba}, the space
$A\otimes T_{\sigma}^{qsh}(A)$ is a unital associative algebra
with the product
$$\lozenge_{\sigma,\lambda}=(m\otimes
\Join_{\sigma,\lambda})(\mathrm{id}_A\otimes \beta \otimes
\mathrm{id}_{T(A)}).$$ We denote by
$\mathcal{R}_{\sigma,\lambda}(A)$ the pair $(A\otimes
T_{\sigma}^{qsh}(A),\lozenge_{\sigma,\lambda})$. We can view
$\mathcal{R}_{\sigma,\lambda}(A)$ as $T^+(A)=\oplus_{i\geq
1}A^{\otimes i}$ at the level of vector spaces. Recently, we have
two products $\Join_{\sigma,\lambda}$ and
$\lozenge_{\sigma,\lambda}$ on $T^+(A)$. This is an example of
2-braided algebras which produce quantum multi-brace algebras (for
the definitions, see \cite{JR}). We define an endomorphism
$P_A:\mathcal{R}_{\sigma,\lambda}(A)\rightarrow\mathcal{R}_{\sigma,\lambda}(A)$
by
\begin{eqnarray*}P_A(a_0\underline{\otimes}u)&=&1_A\underline{\otimes}a_0\otimes u,\ \mathrm{if}\ u\in T^+(A),\\[3pt]
P_A(a_0\underline{\otimes}\nu)&=&1_A\underline{\otimes}\nu\cdot
a_0,\ \mathrm{if}\ \nu \in \mathbb{K} .\end{eqnarray*}

\begin{theorem}Under the assumptions above, the pair $(\mathcal{R}_{\sigma,\lambda}(A), P_A)$ is a Rota-Baxter algebra of weight $\lambda$.\end{theorem}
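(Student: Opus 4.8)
The plan is to verify the Rota--Baxter relation of weight $\lambda$ directly, by unwinding the definition of $\lozenge_{\sigma,\lambda}$ and the inductive formula of $\Join_{\sigma,\lambda}$, using the fact that the inserted unit $1_A$ is transparent to both the braiding $\beta$ and the multiplication $m$. I write a typical element of $\mathcal{R}_{\sigma,\lambda}(A)=A\otimes T^{qsh}_\sigma(A)$ as $a_0\underline{\otimes}u$ with $a_0\in A$ and $u\in T(A)$ (allowing $u\in\mathbb{K}$), and I denote by $\widehat{a_0\underline{\otimes}u}=a_0\otimes u\in T^+(A)$ its image under the vector space identification $A\otimes T(A)\cong T^+(A)$; with this notation the two clauses defining $P_A$ merge into the single formula $P_A(a_0\underline{\otimes}u)=1_A\underline{\otimes}(a_0\otimes u)$. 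Fixing $x=a_0\underline{\otimes}u$ and $y=b_0\underline{\otimes}v$ and writing $\hat x=a_0\otimes u$, $\hat y=b_0\otimes v$, I must establish
\[P_A(x)\lozenge_{\sigma,\lambda}P_A(y)=P_A\bigl(x\lozenge_{\sigma,\lambda}P_A(y)\bigr)+P_A\bigl(P_A(x)\lozenge_{\sigma,\lambda}y\bigr)+\lambda\,P_A\bigl(x\lozenge_{\sigma,\lambda}y\bigr).\]

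First I would record the unitality facts that do the work: from $\sigma(a\otimes1_A)=1_A\otimes a$ together with the reduced expression $\chi_{i,1}=s_is_{i-1}\cdots s_1$ one gets $\beta_{i,1}(w\otimes1_A)=1_A\otimes w$ for all $w\in A^{\otimes i}$ (while $\beta_{0,1}$ is the flip), and of course $m(1_A\otimes c)=m(c\otimes1_A)=c$. Feeding these into $\lozenge_{\sigma,\lambda}=(m\otimes\Join_{\sigma,\lambda})(\mathrm{id}_A\otimes\beta\otimes\mathrm{id}_{T(A)})$, each of the four products in the identity collapses to a single $\Join_{\sigma,\lambda}$-expression; explicitly $\beta(\hat x\underline{\otimes}1_A)=1_A\underline{\otimes}\hat x$ and $m(1_A\otimes1_A)=1_A$ give
\[P_A(x)\lozenge_{\sigma,\lambda}P_A(y)=1_A\underline{\otimes}\bigl(\hat x\Join_{\sigma,\lambda}\hat y\bigr),\]
and in the same way one obtains $x\lozenge_{\sigma,\lambda}P_A(y)=a_0\underline{\otimes}(u\Join_{\sigma,\lambda}\hat y)$, then $P_A(x)\lozenge_{\sigma,\lambda}y=(\mathrm{id}_A\otimes\Join_{\sigma,\lambda})(\beta_{i,1}\otimes\mathrm{id}_A^{\otimes|v|})(\hat x\otimes b_0\otimes v)$ with $i$ the length of $\hat x$, and $x\lozenge_{\sigma,\lambda}y=(m\otimes\Join_{\sigma,\lambda})(\mathrm{id}_A\otimes\beta_{i-1,1}\otimes\mathrm{id}_A^{\otimes|v|})(a_0\otimes u\otimes b_0\otimes v)$, where $i-1$ is the length of $u$. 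Since $P_A$ only prepends $1_A$, the right-hand side of the identity equals $1_A\underline{\otimes}\bigl(\widehat{x\lozenge_{\sigma,\lambda}P_A(y)}+\widehat{P_A(x)\lozenge_{\sigma,\lambda}y}+\lambda\,\widehat{x\lozenge_{\sigma,\lambda}y}\bigr)$, so it remains to check that the bracketed sum equals $\hat x\Join_{\sigma,\lambda}\hat y$.

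The crux is the observation that, after substituting the expressions just found, the bracketed sum becomes
\begin{align*}
& a_0\otimes(u\Join_{\sigma,\lambda}\hat y)+(\mathrm{id}_A\otimes\Join_{\sigma,\lambda})(\beta_{i,1}\otimes\mathrm{id}_A^{\otimes|v|})(\hat x\otimes b_0\otimes v)\\
& \quad +\lambda\,(m\otimes\Join_{\sigma,\lambda})(\mathrm{id}_A\otimes\beta_{i-1,1}\otimes\mathrm{id}_A^{\otimes|v|})(a_0\otimes u\otimes b_0\otimes v),
\end{align*}
and this is \emph{precisely} the inductive formula defining $\hat x\Join_{\sigma,\lambda}\hat y=(a_0\otimes u)\Join_{\sigma,\lambda}(b_0\otimes v)$ for the braided algebra $(A,\lambda m,\sigma)$: the three summands are respectively its ``$a_1\otimes(\cdots)$'' term, its ``$(\mathrm{id}_A\otimes\Join_{\sigma,\lambda})(\beta_{i,1}\otimes\mathrm{id})$'' term, and its ``$(\lambda m\otimes\Join_{\sigma,\lambda})(\mathrm{id}_A\otimes\beta_{i-1,1}\otimes\mathrm{id})$'' term, the external $\lambda$ in the last summand being exactly the scalar occurring in the product $\lambda m$ of that braided algebra. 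Hence the bracketed sum equals $\hat x\Join_{\sigma,\lambda}\hat y$, so the right-hand side equals $1_A\underline{\otimes}(\hat x\Join_{\sigma,\lambda}\hat y)$, which is the left-hand side. The argument does not invoke the associativity of $\lozenge_{\sigma,\lambda}$, which is in any case provided by Lemma 3 in \cite{Ba}.

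I expect the only real friction to be bookkeeping: keeping the two tensor products $\otimes$ and $\underline{\otimes}$ distinct, pinning down exactly which $\beta_{p,q}$ acts on which block (notably the flip $\beta_{0,1}$), and handling the short arguments $u\in\mathbb{K}$ or $v\in\mathbb{K}$, for which the inductive formula of $\Join_{\sigma,\lambda}$ is to be read with the conventions that an empty tensor block is the unit $1\in\mathbb{K}\subset T(A)$ and that $1\Join_{\sigma,\lambda}w=w\Join_{\sigma,\lambda}1=w$. When $x=a_0$ and $y=b_0$ lie in $A$, a direct check shows both sides reduce to $1_A\underline{\otimes}\bigl(a_0\otimes b_0+\sigma(a_0\otimes b_0)+\lambda m(a_0\otimes b_0)\bigr)=1_A\underline{\otimes}(a_0\Join_{\sigma,\lambda}b_0)$, which anchors the argument at the base case $i=j=1$; the general computation then runs along the lines above.
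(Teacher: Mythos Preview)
Your proof is correct and follows essentially the same route as the paper: both compute the four products $P_A(x)\lozenge_{\sigma,\lambda}P_A(y)$, $x\lozenge_{\sigma,\lambda}P_A(y)$, $P_A(x)\lozenge_{\sigma,\lambda}y$, and $x\lozenge_{\sigma,\lambda}y$ by exploiting the transparency of $1_A$ under $\beta$ and $m$, and then recognise that the sum of the last three (with the $\lambda$ weight) reproduces exactly the three-term inductive formula for $\hat x\Join_{\sigma,\lambda}\hat y$. Your write-up is a bit more explicit about the unitality identities and the degenerate cases $u,v\in\mathbb{K}$, but the argument is the same as the paper's.
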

\begin{proof}Observe that $\beta(u\underline{\otimes}1_A)=1_A\underline{\otimes}u$ for any $u\in T(A)$. Therefore for any $a, b\in A$ and $x\in A^{\otimes i}$,$ y\in A^{\otimes j}$, we have
\begin{eqnarray*}\lefteqn{P\big((a\underline{\otimes}x)\lozenge_{\sigma,\lambda}P(b\underline{\otimes}y)\big)}\\[3pt]
&=&P\big((a\underline{\otimes}x)\lozenge_{\sigma,\lambda}(1_A\underline{\otimes}b\otimes
y)\big)=P\Big(a\underline{\otimes}\big(x\Join_{\sigma,\lambda}(b\otimes
y)\big)\Big)\\[3pt]
&=&1_A\underline{\otimes}a\otimes
\big(x\Join_{\sigma,\lambda}(b\otimes y)\big),\end{eqnarray*}
\begin{eqnarray*}\lefteqn{P\big(P(a\underline{\otimes}x)\lozenge_{\sigma,\lambda}(b\underline{\otimes}y)\big)}\\[3pt]
&=&P\big((1_A\underline{\otimes}a\otimes  x)\lozenge_{\sigma,\lambda}(b\underline{\otimes}y)\big)\\[3pt]
&=&1_A\underline{\otimes}\big((\mathrm{id}_A\otimes
\Join_{\sigma,\lambda,(i+1,j)})(\beta_{i+1,1}\otimes
\mathrm{id}_A^{\otimes j})(a\otimes  x\otimes b\otimes
y)\big),\end{eqnarray*}and
\begin{eqnarray*}\lefteqn{\lambda P\big((a\underline{\otimes}x)\lozenge_{\sigma,\lambda}(b\underline{\otimes}y)\big)}\\[3pt]
&=& 1_A\underline{\otimes}\big((\lambda\cdot m\otimes
\Join_{\sigma,\lambda})(\mathrm{id}_A\otimes \beta_{i1} \otimes
\mathrm{id}_A^{\otimes j})(a\otimes x\otimes b\otimes y)\big)
.\end{eqnarray*} By taking a summation, we get
\begin{eqnarray*}\lefteqn{P\big((a\underline{\otimes}x)\lozenge_{\sigma,\lambda}P(b\underline{\otimes}y)\big)+P\big(P(a\underline{\otimes}x)\lozenge_{\sigma,\lambda}(b\underline{\otimes}y)\big)+\lambda P\big((a\underline{\otimes}x)\lozenge_{\sigma,\lambda}(b\underline{\otimes}y)\big)}\\[3pt]
&=&1_A\underline{\otimes}\Big(a\otimes
\big(x\Join_{\sigma,\lambda}(b\otimes
y)\big)\\[3pt]
&&\ \ \ \ \ \ \ \ \ +(\mathrm{id}_A\otimes
\Join_{\sigma,\lambda,(i+1,j)})(\beta_{i+1,1}\otimes
\mathrm{id}_A^{\otimes j})(a\otimes  x\otimes b\otimes
y)\\[3pt]
&&\ \ \ \ \ \ \ \ \ +(\lambda\cdot m\otimes
\Join_{\sigma,\lambda})(\mathrm{id}_A\otimes \beta_{i1} \otimes
\mathrm{id}_A^{\otimes j})(a\otimes x\otimes b\otimes y)\Big)\\[3pt]
&=&1_A\underline{\otimes}\big((a\otimes x)\Join_{\sigma,\lambda}(b\otimes y)\big)\\[3pt]
&=&P_A(a\underline{\otimes}x)\lozenge_{\sigma,\lambda}P_A(b\underline{\otimes}y).\end{eqnarray*}\end{proof}

\begin{definition}A triple $(R,P,\sigma)$ is called a \emph{braided Rota-Baxter algebra of weight $\lambda$} if $(R,\sigma)$ is a braided algebra and $P$ is an endomorphism of $R$ such that $(R,P)$ is a Rota-Baxter algebra of weight $\lambda$ and $\sigma(P\otimes P)=(P\otimes P)\sigma$.\end{definition}

\begin{example}Let $(A,m,1_A,\sigma)$ be a unital braided algebra and $(\mathcal{R}_{\sigma,\lambda}(A),P_A)$ be the Rota-Baxter algebra defined before. Then $(\mathcal{R}_{\sigma,\lambda}(A),P_A,\beta)$ is a braided Rota baxter of weight $\lambda$.

Indeed, the only thing we need to verify is that $\beta(P_A\otimes
P_A)=(P_A\otimes P_A)\beta$. For any $a, b\in A$ and $x\in
A^{\otimes i}$,$ y\in A^{\otimes j}$, we have
\begin{eqnarray*}\lefteqn{\beta(P_A\otimes
P_A)\big((a\underline{\otimes}x)\underline{\otimes}(b\underline{\otimes}y)\big)}\\[3pt]
&=&\beta\Big(\big(1_A\underline{\otimes}(a\otimes x)\big)\underline{\otimes}\big(1_A\underline{\otimes}(b\otimes y)\big)\Big)\\[3pt]
&=&1_A\underline{\otimes}(\beta_{1,j+1}\otimes \mathrm{id}_A^{\otimes i+1})\Big(1_A\underline{\otimes}\beta_{i+1,j+1}\big((a\otimes x)\underline{\otimes}(b\otimes y)\big)\Big)\\[3pt]
&=&(P_A\otimes
P_A)\beta\big((a\underline{\otimes}x)\underline{\otimes}(b\underline{\otimes}y)\big).\end{eqnarray*}\end{example}

\begin{proposition}Let $(R,P,\sigma)$ be a braided Rota-Baxter algebra of weight $\lambda$. We define $$x\star_P y=xP(y)+P(x)y+\lambda xy,$$ for any $x,y \in R$. If $\sigma(P\otimes \mathrm{id})=(\mathrm{id}\otimes P)\sigma$ and $\sigma(\mathrm{id}\otimes P)=(P\otimes \mathrm{id})\sigma$, then $(R,\star_P,P,\sigma)$ is again a braided Rota-Baxter algebra of weight $\lambda$.\end{proposition}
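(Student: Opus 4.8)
The plan is to verify the three items in the definition of a braided Rota-Baxter algebra for $(R,\star_P,P,\sigma)$: associativity of $\star_P$, the Rota-Baxter identity of weight $\lambda$ for $P$ relative to $\star_P$, and the braided-algebra compatibility of $\sigma$ with $\star_P$. (The remaining condition $\sigma(P\otimes P)=(P\otimes P)\sigma$ concerns neither the product nor any new map, so it is part of the hypothesis and needs nothing further; it also follows at once by applying the two displayed intertwining relations one after the other.) The single identity underlying everything is that the Rota-Baxter relation for $(R,P)$ is literally the equation $P(x)P(y)=P(x\star_P y)$ for all $x,y\in R$, which I would record as the first step.

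The associativity of $\star_P$ and the Rota-Baxter identity for $\star_P$ are then formal consequences of $P(x)P(y)=P(x\star_P y)$ and the associativity of $m$, with no reference to $\sigma$. Expanding $(x\star_P y)\star_P z$ and $x\star_P(y\star_P z)$ and replacing each factor $P(x\star_P y)$ by $P(x)P(y)$ (and symmetrically on the right) yields in both cases the same sum of the seven terms $xP(y)P(z)$, $P(x)yP(z)$, $P(x)P(y)z$, $\lambda\,xyP(z)$, $\lambda\,xP(y)z$, $\lambda\,P(x)yz$, $\lambda^2\,xyz$; this is the well-known statement that the total product of the tridendriform algebra attached to a Rota-Baxter algebra is associative. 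For the Rota-Baxter identity one expands $P(x)\star_P P(y)=P(x)P(P(y))+P(P(x))P(y)+\lambda\,P(x)P(y)$ and applies $P(a)P(b)=P(a\star_P b)$ to each summand, producing exactly $P(x\star_P P(y))+P(P(x)\star_P y)+\lambda\,P(x\star_P y)$.

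For the braided-algebra compatibility, write $\star_P=m\circ N$ with $N=P\otimes\mathrm{id}_R+\mathrm{id}_R\otimes P+\lambda\,\mathrm{id}_R^{\otimes 2}$. The two hypotheses are precisely the relations $\sigma_i P_i=P_{i+1}\sigma_i$ and $\sigma_i P_{i+1}=P_i\sigma_i$ on $R^{\otimes n}$ ($P_k$ meaning $P$ in the $k$-th slot), together with the obvious fact that $\sigma_i$ commutes with $P_k$ for $k\notin\{i,i+1\}$. On $R^{\otimes 3}$ one has $\sigma(\star_P\otimes\mathrm{id}_R)=\sigma(m\otimes\mathrm{id}_R)(P_1+P_2+\lambda\,\mathrm{id}_R^{\otimes 3})$; using the braided-algebra relation $\sigma(m\otimes\mathrm{id}_R)=(\mathrm{id}_R\otimes m)\sigma_1\sigma_2$ for $m$ and then pushing $P_1,P_2$ through $\sigma_1\sigma_2$ via $\sigma_1\sigma_2 P_1=P_2\sigma_1\sigma_2$ and $\sigma_1\sigma_2 P_2=P_3\sigma_1\sigma_2$, this becomes $(\mathrm{id}_R\otimes m)(P_2+P_3+\lambda\,\mathrm{id}_R^{\otimes 3})\sigma_1\sigma_2=(\mathrm{id}_R\otimes\star_P)\sigma_1\sigma_2$, which is the first braided-algebra condition for $\star_P$; the second one, $(\star_P\otimes\mathrm{id}_R)\sigma_2\sigma_1=\sigma(\mathrm{id}_R\otimes\star_P)$, follows by the mirror computation using $\sigma(\mathrm{id}_R\otimes m)=(m\otimes\mathrm{id}_R)\sigma_2\sigma_1$. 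The only delicate point is this last piece of tensor-leg bookkeeping, namely making sure the intertwining relations for $\sigma$ and $P$ are invoked in exactly the slots where $\sigma_1,\sigma_2$ act and that the weight term is carried along untouched; there is no conceptual obstacle once $P(x)P(y)=P(x\star_P y)$ and the braided-algebra relations for $m$ are in hand.
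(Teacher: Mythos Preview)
Your proof is correct and follows essentially the same route as the paper. The paper cites the fact that $(R,\star_P,P)$ is a Rota-Baxter algebra of weight $\lambda$ as well-known and then carries out exactly your computation for the braided-algebra compatibility---writing $\star_P$ as $m$ precomposed with $P_1+P_2+\lambda\,\mathrm{id}$, applying $\sigma(m\otimes\mathrm{id})=(\mathrm{id}\otimes m)\sigma_1\sigma_2$, and pushing $P$ through $\sigma_1\sigma_2$; you simply supply more detail by also writing out the associativity and Rota-Baxter checks that the paper takes for granted.
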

\begin{proof}It is well-known that $(R,\star_P,P)$ is a Rota-Baxter algebra of weight $\lambda$.
We denote by $m$ the multiplication of $R$. Then we have
\begin{eqnarray*}\lefteqn{\sigma(\star_P\otimes \mathrm{id})}\\[3pt]
&=&\sigma\big((m\otimes \mathrm{id})( \mathrm{id}\otimes P\otimes \mathrm{id})+(m\otimes \mathrm{id})( P\otimes \mathrm{id}\otimes \mathrm{id})+\lambda m\otimes \mathrm{id}\big)\\[3pt]
&=&(\mathrm{id}\otimes m)\sigma_1\sigma_2( \mathrm{id}\otimes P\otimes \mathrm{id})+(\mathrm{id}\otimes m)\sigma_1\sigma_2( P\otimes \mathrm{id}\otimes \mathrm{id})\\[3pt]
&&+(\mathrm{id}\otimes \lambda m)\sigma_1\sigma_2\\[3pt]
&=&\big((\mathrm{id}\otimes m)( \mathrm{id}\otimes\mathrm{id} \otimes P)+(\mathrm{id}\otimes m)(\mathrm{id} \otimes P\otimes \mathrm{id})+(\mathrm{id}\otimes \lambda m)\big)\sigma_1\sigma_2\\[3pt]
&=&(\mathrm{id}\otimes\star_P
)\sigma_1\sigma_2.\end{eqnarray*}

The another condition $\sigma(\mathrm{id}\otimes\star_P
)=(\star_P\otimes \mathrm{id})\sigma_2\sigma_1$ can be verified
similarly.\end{proof} Given any braided Rota-Baxter algebra
$(R,P,\sigma)$, the above proposition provides another example of
2-braided algebra.

Now we turn to tridendriform algebras which were introduced by
Loday and Ronco (\cite{LR}).

\begin{definition}Let $V$ be a vector space, and $\prec$, $\succ$ and $\cdot$ be three binary operations on $V$. The quadruple $(V,\prec, \succ, \cdot)$ is called a \emph{tridendriform algebra} if the following relations are
satisfied: for any $x,y,z\in V$,
\begin{eqnarray*}(x\prec y)\prec z&=&x\prec(y\ast z),\\[3pt]
(x\succ y)\prec z&=&x\succ (y\prec z),\\[3pt]
(x\ast y)\succ z&=&x\succ(y\succ z),\\[3pt]
(x\succ y)\cdot z&=&x\succ(y \cdot z),\\[3pt]
(x\prec y)\cdot z&=&x\cdot (y\succ z),\\[3pt]
(x\cdot y)\prec z&=&x\cdot (y\prec z),\\[3pt]
(x\cdot y)\cdot z&=&x\cdot (y \cdot z),\end{eqnarray*}where $x\ast
y=x\prec y+x\succ y+ x\cdot y$.\end{definition}

\begin{theorem}Let $(A,m,\sigma)$ be a braided algebra. We define three operations $\cdot$, $\prec$ and $\succ$ on $T(A)$ recursively by: for any $a,b\in A$ and any $x\in A^{\otimes i}$, $x\in A^{\otimes i}$,
\begin{eqnarray*}(a\otimes x)\cdot (b\otimes y)&=&(m\otimes\Join_{\sigma (i,j)} )(\mathrm{id}_A\otimes
\beta_{i,1}\otimes \mathrm{id}_A^{\otimes j-1})(a\otimes x\otimes
b\otimes y),\\[3pt]
(a\otimes x)\prec (b\otimes y)&=&a\otimes \big(x\Join_\sigma (b\otimes y)\big),\\[3pt]
(a\otimes x)\succ (b\otimes y)&=&(\mathrm{id}_A\otimes
\Join_{\sigma (i+1,j)})(\beta_{i+1,1}\otimes
\mathrm{id}_A^{\otimes j})(a\otimes x\otimes b\otimes
y).\end{eqnarray*}Then $(T(A), \prec,\succ,\cdot)$ is a
tridendriform algebra.
\end{theorem}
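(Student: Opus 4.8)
The plan is to verify the seven tridendriform axioms directly, exploiting the fact that the three operations $\prec$, $\succ$, $\cdot$ are assembled from the quantum quasi-shuffle product $\Join_\sigma$ together with the braiding $\beta$ and the multiplication $m$, and that $\ast = \prec + \succ + \cdot$ is nothing but the restriction of $\Join_\sigma$ to $T^+(A)$. Indeed, comparing with the inductive formula for $\Join_\sigma$ recalled at the start of Section 3, one sees that for $u = a\otimes x$ and $v = b\otimes y$ one has $u\ast v = u\prec v + u\succ v + u\cdot v = (a\otimes x)\Join_\sigma(b\otimes y)$; this identification is the backbone of the argument, since several of the axioms involve $\ast$ on one side.

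The natural order is to treat the axioms in groups. First I would handle the three axioms not involving $\cdot$, namely $(x\prec y)\prec z = x\prec(y\ast z)$, $(x\succ y)\prec z = x\succ(y\prec z)$, and $(x\ast y)\succ z = x\succ(y\succ z)$: these are exactly the dendriform relations, and they follow from the associativity of $\Join_\sigma$ (Remark 3.1(1)) together with the splitting $u\ast v = u\prec v+u\succ v+u\cdot v$, using that $\prec$ strips off the first letter of the left argument and $\succ$ braids the left argument past the first letter of the right one. Concretely, for the first relation one writes both sides in terms of $a\otimes\big(x\Join_\sigma(\cdots)\big)$ and invokes associativity of $\Join_\sigma$; for the second and third the key input is the naturality identity $\beta_{i+1,1}=(\beta_{i,1}\otimes\mathrm{id}_A)(\mathrm{id}_A\otimes\beta_{1,1})$ type relations together with the braid relations for $\beta$, so that one may re-associate the braiding that $\succ$ introduces. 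Then I would do the purely associative axiom $(x\cdot y)\cdot z = x\cdot(y\cdot z)$, which reduces to the associativity of $m$ (via $m(m\otimes\mathrm{id}_A)=m(\mathrm{id}_A\otimes m)$), the associativity of $\Join_\sigma$, and the compatibility of $\beta$ with $m$ — precisely the braided-algebra axioms $(\mathrm{id}_A\otimes m)\sigma_1\sigma_2 = \sigma(m\otimes\mathrm{id}_A)$ and its mirror, applied repeatedly to move $\beta_{i,1}$ through the multiplications.

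Finally I would treat the three mixed axioms $(x\succ y)\cdot z = x\succ(y\cdot z)$, $(x\prec y)\cdot z = x\cdot(y\succ z)$, and $(x\cdot y)\prec z = x\cdot(y\prec z)$. These are where the braided-algebra compatibility conditions really do their work: each side, after expanding, is a composite of copies of $m$, $\beta_{\ast,1}$, and $\Join_\sigma$, and the identity amounts to showing that two such composites agree. The main obstacle will be bookkeeping: keeping track of which tensor factor each $\beta_{i,1}$ and each $m$ acts on as the word lengths grow, and verifying that the requisite Yang–Baxter and $m$-$\sigma$ compatibility moves line up. I expect the cleanest route is induction on the total length $i+j+k$ of the three arguments, reducing at each step — via the inductive formula for $\Join_\sigma$ — to instances of the same axiom for shorter words plus the base relations; the braided-algebra axioms of Definition 2.1 and the braid relations for $\beta$ (equivalently, well-definedness of $T^\sigma_w$) supply exactly the elementary moves needed to close the induction.
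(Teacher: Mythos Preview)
Your proposal is correct and follows essentially the same approach as the paper: direct verification of the seven axioms using that $\ast=\Join_\sigma$ on $T^+(A)$, the associativity of $\Join_\sigma$, and the braided-algebra compatibility between $\Join_\sigma$ and $\beta$. The only refinement is that the paper avoids your proposed induction on total length by invoking directly that $(T(A),\Join_\sigma,\beta)$ is itself a braided algebra (Remark~3.1(1)), so identities such as $\beta(\Join_\sigma\otimes\mathrm{id}_{T(A)})=(\mathrm{id}_{T(A)}\otimes\Join_\sigma)\beta_1\beta_2$ hold globally and each axiom reduces to a short chain of equalities rather than an inductive argument.
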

\begin{proof}All the verifications are direct. We just need that $\Join_\sigma$ is associative and compatible with the braiding $\beta$ in the sense of braided algebras. For instance, we show the third condition. For any $a\in A$ and $x,y,z\in T(A)$,
\begin{eqnarray*}
\lefteqn{(x\Join_\sigma y)\succ (a\otimes
z)}\\[3pt]
&=&(\mathrm{id}_A\otimes \Join_{\sigma})(\beta_{?,1}\otimes
\mathrm{id}_{T(A)})(\Join_\sigma \otimes \mathrm{id}_A\otimes \mathrm{id}_{T(A)})(x \underline{\otimes} y\underline{\otimes} a\otimes z)\\[3pt]
&=&(\mathrm{id}_A\otimes \Join_{\sigma})(\beta_{?,1}(\Join_\sigma
\otimes \mathrm{id}_A)\otimes
\mathrm{id}_{T(A)})(x \underline{\otimes} y\underline{\otimes} a\otimes z)\\[3pt]
&=&(\mathrm{id}_A\otimes
\Join_{\sigma})(\mathrm{id}_A\otimes\Join_\sigma  \otimes
\mathrm{id}_{T(A)})\\[3pt]
&&\circ(\beta_{?,1}\otimes \mathrm{id}_{T(A)}\otimes\mathrm{id}_{T(A)})(\mathrm{id}_{T(A)}\otimes\beta_{?,1}\otimes \mathrm{id}_{T(A)})(x \underline{\otimes} y\underline{\otimes} a\otimes z)\\[3pt]
&=&(\mathrm{id}_A\otimes
\Join_{\sigma})(\mathrm{id}_A\otimes\mathrm{id}_{T(A)}
\otimes\Join_\sigma
)\\[3pt]
&&\circ(\beta_{?,1}\otimes \mathrm{id}_{T(A)}\otimes\mathrm{id}_{T(A)})(\mathrm{id}_{T(A)}\otimes\beta_{?,1}\otimes \mathrm{id}_{T(A)})(x \underline{\otimes} y\underline{\otimes} a\otimes z)\\[3pt]
&=&(\mathrm{id}_A\otimes \Join_{\sigma})(\beta_{?,1}\otimes \mathrm{id}_{T(A)})\\[3pt]
&&\circ(\mathrm{id}_{T(A)}\otimes\mathrm{id}_A\otimes\Join_\sigma )(\mathrm{id}_{T(A)}\otimes\beta_{?,1}\otimes \mathrm{id}_{T(A)})(x \underline{\otimes} y\underline{\otimes} a\otimes z)\\[3pt]
&=&x\succ\big(y\succ  (a\otimes z)\big).
\end{eqnarray*}\end{proof}

\begin{remark}Let $(R,\cdot,P)$ be a Rota-Baxter algebra of weight 1. Define $a\prec b=a\cdot P(b)$ and $a\succ b=P(a)\cdot b$. Then $(R,\prec,\succ,\cdot)$ is a tridendriform algebra (see \cite{Lod}). Using this fact, we can prove the above theorem by a more easier argument: embed $A$ into the unital braided algebra $(\widetilde{A},\widetilde{m},1,\widetilde{\sigma})$, then the tridendriform algebra structure in Theorem 5.8 comes from the Rota-Baxter algebra $(\mathcal{R}_{\widetilde{\sigma},1}(\widetilde{A}),P_{\widetilde{A}})$.\end{remark}

\section*{Acknowledgements}
I learned the original construction of quantum quasi-shuffles from
Professor Marc Rosso in 2008. I am grateful to him for encouraging
me to continue the study of this subject. Finally, I would like to
thank Professor Daniel Sternheimer for his suggestion that
improves the title of this paper. This work was partially
supported by NSF of China (Grant No. U0935003), and the grant for
new teachers from DGUT (Grant No. ZJ100501).

\bibliographystyle{amsplain}

\end{document}